\newcommand{\rrvert}{\vert}
\newcommand{\llvert}{\vert}
\newcommand{\eqref}[1]{(\ref{#1})}
\newcommand{\iint}{\int\!\!\!\int}
\newtheorem{thmm}{Theorem}[section]
\newtheorem{lem}[thmm]{Lemma}
\newtheorem{prop}[thmm]{Proposition}
\newtheorem{cor}[thmm]{Corollary}
\newcommand{\R}{\mathbb{R}}
\newcommand{\N}{\mathbb{N}}
\newcommand{\ent}{\operatorname{Ent}}
\newcommand{\Osc}{\operatorname{Osc}}
\begin{document}
\begin{frontmatter}

\title{Characterization of Talagrand's transport-entropy inequalities in metric spaces\thanksref{T1}}
\thankstext{T1}{Supported in part by the ``Agence Nationale de la Recherche'' through the Grants ANR 2011 BS01 007 01 and ANR 10 LABX-58.}
\runtitle{Characterization of transport-entropy inequalities}

\begin{aug}
\author[A]{\fnms{N.} \snm{Gozlan}\ead[label=e1]{nathael.gozlan@univ-mlv.fr}},
\author[B]{\fnms{C.} \snm{Roberto}\corref{}\ead[label=e2]{croberto@math.cnrs.fr}\thanksref{t2}}
\and
\author[A]{\fnms{P.-M.} \snm{Samson}\ead[label=e3]{paul-marie.samson@univ-mlv.fr}}

\thankstext{t2}{Supported in part by the European Research Council
through the ``Advanced Grant'' PTRELSS 228032.}
\runauthor{N. Gozlan, C. Roberto and P.-M. Samson}
\affiliation{Universit\'e Paris Est Marne la Vall\'ee, Universit\'e
Paris Est Marne la Vall\'ee and Universit\'e Paris Ouest Nanterre la D\'
efense, and
Universit\'e~Paris~Est~Marne la Vall\'ee}
\address[A]{N. Gozlan\\
P.-M. Samson\\
Laboratoire d'Analyse et de Math\'ematiques\\
\quad Appliqu\'ees (UMR CNRS 8050)\\
Universit\'e Paris Est Marne la Vall\'ee\\
5 bd Descartes\\
77454 Marne la Vall\'ee Cedex 2\\
France\\
\printead{e1}\\
\phantom{E-mail: }\printead*{e3}} 
\address[B]{C. Roberto\\
Laboratoire d'Analyse et de Math\'ematiques\\
\quad Appliqu\'ees (UMR CNRS 8050)\\
Universit\'e Paris Est Marne la Vall\'ee\\
5 bd Descartes\\
77454 Marne la Vall\'ee Cedex 2\\
France\\
and\\
Universit\'e Paris Ouest Nanterre la D\'efense\\
MODAL'X, EA 3454\\
200 avenue de la R\'epublique 92000 Nanterre\\
France\\
\printead{e2}}
\end{aug}

\received{\smonth{7} \syear{2011}}
\revised{\smonth{3} \syear{2012}}

%
\begin{abstract}
We give a characterization of transport-entropy inequalities in metric
spaces. As an application we deduce that
such inequalities are stable under bounded perturbation
(Holley--Stroock perturbation lemma).
\end{abstract}

%
\begin{keyword}[class=AMS]
\kwd{60E15}
\kwd{26D10}
\end{keyword}
\begin{keyword}
\kwd{Transport-entropy inequalities}
\kwd{logarithmic-Sobolev inequalities}
\kwd{metric spaces}
\kwd{concentration of measure}
\end{keyword}

\end{frontmatter}

\section{Introduction}

In their celebrated paper~\cite{OV00}, Otto and Villani proved that,
in a smooth Riemannian setting, the log-Sobolev inequality
implies the Talagrand transport-entropy inequality ${\mathbf{T}_2}$.
Later, Bobkov, Gentil and Ledoux~\cite{bgl} proposed an alternative
proof of this result. Both approaches are based on semi-group arguments.
More recently, the first named author of this paper gave a new proof,
based on large deviation theory, valid on metric spaces~\cite{gozlan}.

In this paper, on the one hand, we give yet another proof of Otto and
Villani's theorem. This proof does not use any semi-group arguments nor
large deviations, and it requires very few structures on the space. We
are thus able to recover and extend the result of~\cite{gozlan} in a
general metric space framework.

On the other hand, we recently introduced in~\cite{GRS11} a new family
of functional inequalities,
called inf-convolution log-Sobolev inequalities. In a Euclidean
framework, we proved
that these inequalities are equivalent to Talagrand transport-entropy
inequalities ${\mathbf{T}}_\alpha$, associated to cost functions $\alpha$
between linear and quadratic. This led to a new characterization of
${\mathbf{T}_2}$ and other transport-entropy inequalities. The present paper
establishes that this equivalence is true in a general metric space
framework and for general cost functions $\alpha$. As a byproduct, we
prove that the inequalities ${\mathbf{T}}_\alpha$ are stable under bounded
perturbation (Holley--Stroock perturbation lemma).

Our strategy is very general and applies to a very large class of
transport-entropy inequalities.

In order to present our results, we need first to fix some notation.

\subsection{Notation and definitions}

We first introduce the notion of optimal transport cost. Then we give
the definition of the transport-entropy inequality and of
the $(\tau)$-log-Sobolev inequality.

\textit{General assumption}. Throughout this paper, $(X,d)$ will
always be a complete, separable metric space such that closed balls are
compact.

\subsubsection{Optimal transport cost and transport-entropy inequality}
Let $\alpha\dvtx\break  \mathbb{R} \to\mathbb{R}^+$ be a continuous function.
Given two probability measures $\nu$ and $\mu$ on
$X$, the \textit{optimal transport cost} between $\nu$ and $\mu$ (with
respect to the cost function $\alpha$) is defined by
\[
\mathcal{T}_\alpha(\nu,\mu):=\inf_\pi \biggl\{ \iint\alpha
\bigl(d(x,y)\bigr) \,d\pi(x,y) \biggr\},
\]
where the infimum runs over all the probability measures $\pi$ on $X
\times X$ with marginals $\nu$ and $\mu$.
The notion of optimal transport cost is very old (it goes back to Monge
\cite{monge}). It has been intensively studied
and it is used in a wide class of problems running from geometry, PDE
theory, probability and statistics; see~\cite{villani}.
Here we focus on the following transport-entropy inequality.

Throughout this paper, the cost functions $\alpha$ will be assumed to
belong to the class of Young functions.

\begin{defi}[(Young functions\setcounter{footnote}{2}\footnote{Note that, contrary to the
definition of some authors, for us,
a Young function cannot take infinite values.})]
A function $\alpha\dvtx \R\to\R^+$ is a \textit{Young function} if
$\alpha$ is an even, convex, increasing function on $\mathbb{R}^+$
such that $\alpha(0)=0$ and $\alpha'(0)=0.$
\end{defi}

\begin{defi}[(Transport-entropy inequality ${\mathbf{T}_\alpha}$)] \label{deftci}
Let $\alpha$ be a Young function; a probability measure $\mu$ on $X$
is said to satisfy the \textit{transport-entropy inequality} (\ref{tac}), for some $C>0$ if
%
\renewcommand{\theequation}{${\mathbf{T}_\alpha(C)}$}
\begin{equation}\label{tac}
\mathcal{T}_\alpha(\nu,\mu)\leq C H(\nu| \mu)\qquad \forall\nu \in
\mathcal{P}(X),
\end{equation}
where
\[
H(\nu|\mu)= \cases{
\displaystyle\int\log\frac{d\nu}{d\mu} \,d
\nu,& \quad$\mbox{if } \nu\ll\mu,$
\vspace*{2pt}\cr
+\infty,&\quad $\mbox{otherwise},$}
\]
is the relative entropy of $\nu$ with respect to $\mu$, and $
\mathcal{P}(X)$ is the set of all probability measures on $X$.
\end{defi}

\begin{rem}\label{remalphaen0}
It can be shown that if $\alpha\dvtx \R\to\R^+$ is an even convex
function such that $\limsup_{x\to0}\frac{\alpha(x)}{x^2}=+\infty$,
then the only probability measures that satisfy the transport
inequality ${\mathbf{T}_\alpha}$ are Dirac masses; see, for example,
\cite{GL07},
Proposition~2.
This is the reason why, in our definition of Young functions, we impose
that $\alpha'(0)=0$.
\end{rem}
Popular Young functions appearing in the literature, as cost functions
in transport-entropy inequalities, are the functions $\alpha_{p_1,p_2}$, defined by
%
\setcounter{equation}{0}
\begin{equation}
\label{alphap1p2}\qquad \displaystyle\alpha_{p_1,p_2}(x):=\cases{ %
|x|^{p_1}, &\quad $\mbox{if } |x|\leq1,$
\vspace*{2pt}\cr
\displaystyle\frac
{p_1}{p_2}|x|^{p_2} +1-\frac{p_1}{p_2}, &\quad$\mbox{if } |x|>1,$}
\qquad p_1\geq2,\ p_2\geq1
\end{equation}
(the case $p_1<2$ can be discarded according to the remark above). When
$p_1=p_2=p$, we use the notation $\alpha_p$ instead of $\alpha_{p,p}.$

Transport-entropy inequalities imply concentration results as shown by
Marton~\cite{marton}; see also~\cite{BG99,ledoux}, and~\cite{gozlan-leonard} for a full introduction to
this notion.

The transport-entropy inequality related to the quadratic cost $\alpha_2(x)=x^2$ is the most studied in the literature. In this case, the
transport-entropy inequality is often referred to as the Talagrand
transport-entropy inequality and is denoted by ${\mathbf{T}_2}$. Talagrand
\cite{talagrand} proved that, on $(\mathbb{R}^n,| \cdot |_2)$
(where $| \cdot |_2$ stands for the Euclidean norm), the standard
Gaussian measure satisfies ${\mathbf{T}_2}$ with the optimal constant \mbox{$C=2$}.

\subsubsection{Log-Sobolev-type inequalities}
The second inequality of interest for us is the log-Sobolev inequality
and, more generally, modified log-Sobolev inequalities. To define these
inequalities properly, we need to introduce additional notation.

Recall that the Fenchel--Legendre transform $\alpha^*$ of a Young
function $\alpha$ is defined by
\[
\alpha^*(y)=\sup_{x\in\R}\bigl\{xy-\alpha(x)\bigr\}\in\R^+\cup\{\infty\}\qquad
\forall y\in\R.
\]
A function $f\dvtx X\to\R$ is said to be \textit{locally Lipschitz} if for
all $x\in X$, there exists a ball $B$ centered at point $x$ such that
\[
\sup_{y,z \in B,\ y \neq z} \frac{|f(y)-f(z)|}{d(y,z)} < \infty.
\]
When $f$ is locally Lipschitz, we define
\[
\bigl|\nabla^+f\bigr|(x)= \cases{ %
\displaystyle\limsup_{y \to x}
\frac{[f( y) - f(x)]_+}{d(y,x)}, & \quad$\mbox{if } x \mbox{ is not an isolated point},$
\vspace*{2pt}\cr
0, & \quad$\mbox{otherwise},$}
\]
and
\[
\bigl|\nabla^-f\bigr|(x)= \cases{ %
\displaystyle\limsup_{y \to x}
\frac{[f( y) - f(x)]_-}{d(y,x)}, & \quad $\mbox{if } x \mbox{ is not an isolated point},$
\vspace*{2pt}\cr
0, & \quad $\mbox{otherwise},$ }
\]
where $[a]_+=\max(a;0)$ and $[a]_-=\max(-a;0)$. Note that $|\nabla^+
f|(x)$ and $|\nabla^-f|(x)$ are finite for all $x\in X$. When $f$ is a
smooth function on a smooth manifold, $|\nabla^+f|$ and $|\nabla^-f|$
equal the norm of the gradient of $f$.

Finally, if $\mu$ is a probability measure on $X$, recall that the
entropy functional $\ent_\mu( \cdot )$ is defined by
\[
\ent_\mu(g) = \int g \log\frac{g}{\int g \,d\mu} \,d\mu \qquad\forall g>0.
\]

\begin{defi}[(Modified log-Sobolev inequality $\mathbf{LSI}_\alpha^{\pm }$)] \label{defmlsi}
Let $\alpha$ be a Young function; a probability measure $\mu$ on $X$
is said to satisfy the \textit{modified log-Sobolev inequality plus}
(\ref{LSIplusA}) for some $A>0$ if
%
\renewcommand{\theequation}{${\mathbf{LSI}}_\alpha^+(A)$}
\begin{equation}\label{LSIplusA}
\ent_\mu\bigl(e^f\bigr)\leq A\int \alpha^*
\bigl(\bigl|\nabla^+f\bigr|\bigr) e^f \,d\mu
\end{equation}
for all locally Lipschitz bounded functions $f\dvtx X \to\mathbb{R}$.

It verifies the \textit{modified log-Sobolev inequality minus} (\ref{LSIminusA}) for some $A>0$ if
%
\renewcommand{\theequation}{${\mathbf{LSI}}_\alpha^-(A)$}
\begin{equation}\label{LSIminusA}
\ent_\mu\bigl(e^f\bigr)\leq A\int \alpha^*
\bigl(\bigl|\nabla^-f\bigr|\bigr) e^f \,d\mu
\end{equation}
for all locally Lipschitz bounded functions $f\dvtx X \to\mathbb{R}$.
\end{defi}

Again, the quadratic cost $\alpha_2(x)=x^2$ plays a special role since
in this case we recognize the usual log-Sobolev inequality
introduced by Gross~\cite{gross}; see also~\cite{stam}. In this case,
we will use the notation $\mathbf{LSI}^\pm.$

Bobkov and Ledoux~\cite{bobkov-ledoux} introduced first the modified
log-Sobolev inequality with the function $\alpha_{2,1}$, in order to
recover the celebrated result by Talagrand~\cite{talagrand91} on the
concentration phenomenon for products of exponential measures. In
particular these authors proved that, with this special choice of
function, the modified log-Sobolev inequality is actually equivalent to
the Poincar\'e inequality.
After them, Gentil, Guillin and Miclo~\cite{ggm} established that the
probability measure
$d\nu_p(x)=e^{-|x|^p}/Z_p$, $x\in\R$ and $p \in(1,2)$ verifies the
modified log-Sobolev inequality associated with the function $\alpha_{2,p}$.
In a subsequent paper~\cite{ggm2},\vadjust{\goodbreak} they generalized their
results to a large class of measures with tails between exponential and
Gaussian; see also~\cite{barthe-roberto,gozlan07,gentil} and~\cite
{papageorgiou}.

Finally, let us introduce the notion of inf-convolution log-Sobolev
inequality. In a previous work~\cite{GRS11}, we proposed
the following inequality:
%
\setcounter{equation}{1}
\begin{equation}
\label{iclsi} \qquad\ent_\mu\bigl(e^f\bigr)\leq
\frac{1}{1-\lambda C} \int\bigl(f-Q_\alpha^\lambda f\bigr)
e^f \,d\mu\qquad \forall f\dvtx X \to\mathbb{R}, \forall\lambda\in(0,1/C),
\end{equation}
where
\[
Q_\alpha^\lambda f(x)=\inf_{y\in X} \bigl\{f(y)+\lambda
\alpha\bigl(d(x,y)\bigr)\bigr\}\qquad \forall x\in X.
\]
We called it inf-convolution log-Sobolev inequality, and we proved that
it is equivalent---in a Euclidean setting---to the transport-entropy
inequality ${\mathbf{T}_\alpha}(C')$, for Young functions $\alpha$ such
that $\alpha'$ is concave. Also, we get an explicit comparison between
the constants $C$ and $C'$, namely $C \leq C' \leq8C$.
Our proof relies in part on the Hamilton--Jacobi semi-group approach
developed by Bobkov, Gentil and Ledoux~\cite{bgl}.

Inequality \eqref{iclsi} is actually a family of inequalities, with a
constant having a specific form [i.e., $1/(1-\lambda C)$] on the
right-hand side.
In this paper, in order to broaden this notion, we will say $(\tau
)$-log-Sobolev inequality, rather than inf-convolution log-Sobolev
inequality, in the following inequality.

\begin{defi}[{[$(\tau)$-log-Sobolev inequality]}] \label{defqlsi}
Let $\alpha$ be a Young function; a probability measure $\mu$ on $X$
is said to satisfy the \textit{$(\tau)$-log-Sobolev inequality} (\ref{tauminusLSI}) for some $\lambda, A>0$
if
%
{\renewcommand{\theequation}{$(\tau)-\mathbf{LSI}_\alpha(\lambda,A)$}
\begin{equation}\label{tauminusLSI}
\ent_\mu\bigl(e^f\bigr)\leq A \int
\bigl(f-Q_\alpha^\lambda f\bigr) e^f \,d\mu
\end{equation}}
\hspace*{-2pt}for all bounded locally Lipschitz functions $f\dvtx X\to\R,$ where the
inf-convolution operator $Q_\alpha^\lambda$ is defined by
%
\setcounter{equation}{2}
\begin{equation}
\label{inf-convoperator} Q_\alpha^\lambda f(x)=
\inf_{y\in X} \bigl\{f(y)+\lambda\alpha\bigl(d(x,y)\bigr)\bigr\}\qquad \forall x
\in X .
\end{equation}
When $\lambda=1$, we use the notation $Q_\alpha$ instead of $Q_\alpha^1.$
\end{defi}
The notation $(\tau)-\mathbf{LSI}_\alpha$ refers to the celebrated
$(\tau)$-Property introduced
by Maurey~\cite{maurey} (that uses the inf-convolution operator
$Q_\alpha$ and that is also closely related to the transport-entropy
inequality; see~\cite{gozlan-leonard}, Section 8.1).

Of course \eqref{iclsi} implies $(\tau)-\mathbf{LSI}_\alpha(\lambda
,1/(1-\lambda C))$, for any $\lambda\in(0,1/C)$.
The other direction is not clear, a priori (it would trivially be true
if $A=1$), even if the two inequalities have the same flavor.
Thanks to Theorem~\ref{GRS-thm-intro} below, they appear to be
equivalent, under mild assumptions on $\alpha$.

\subsubsection{\texorpdfstring{$\Delta_2$-condition}{Delta2-condition}} \label{secyoung}
In the next sections, our objective will be to relate the log-Sobolev
inequalities $\mathbf{LSI}_\alpha$ and $(\tau) - \mathbf{LSI}_\alpha$ to
the transport-entropy inequality~$\mathbf{T}_\alpha$. This program works\vadjust{\goodbreak}
well if we suppose that $\alpha$ verifies the classical doubling
condition $\Delta_2.$ Recall that a Young function $\alpha$ is said
to satisfy the \textit{$\Delta_2$-condition} if there exists some
positive constant $K$ (that must be greater than or equal to $2$)
such that
\[
\alpha(2x) \leq K \alpha(x)\qquad \forall x\in\R.
\]
The classical functions $\alpha_{p_1,p_2}$ introduced in \eqref{alphap1p2} enjoy this condition.

The following observation will be very useful in the sequel.
%
\begin{lem}\label{lemrp}
If $\alpha$ is a Young function satisfying the $\Delta_2$-condition, then
%
\begin{equation}
\label{defrp} r_\alpha:=\inf_{x>0}\frac{x\alpha'_-(x)}{\alpha(x)}\geq1
\quad\mbox{and}\quad 1<p_\alpha:=\sup_{x>0}\frac{x\alpha'_+(x)}{\alpha
(x)}<+\infty,
\end{equation}
where $\alpha'_+$ (resp., $\alpha'_-$) denotes the right (resp., left)
derivative of $\alpha.$
\end{lem}
The proof of this lemma is in the \hyperref[app]{Appendix}.
To understand these exponents $r_\alpha$ and $p_\alpha$, observe that
for the function $\alpha=\alpha_{p_1,p_2}$, defined by \eqref{alphap1p2},
we have $r_\alpha=\min(p_1,p_2)$ and $p_\alpha=\max(p_1,p_2)$.
Moreover, if $1\leq r\leq p$ are given numbers, and $\alpha$ is a
Young function such that $r_\alpha=r$ and $p_\alpha=p$, then it is
not difficult to check that
\[
\alpha(1)\alpha_{p,r}\leq\alpha\leq\alpha(1)\alpha_{r,p}.
\]

\subsection{Main results}

Our first result states that the modified log-Sobolev inequality (plus
or minus) implies the transport-entropy inequality
associated with the same $\alpha$ (Otto--Villani theorem).

\begin{thmm} \label{OV-thm-intro}
Let $\mu$ be a probability measure on $X$ and $\alpha$ a Young
function satisfying the $\Delta_2$-condition.
\begin{longlist}[(ii)]
\item[(i)] If $\mu$ satisfies (\ref{LSIplusA}) for some
$A>0$, then $\mu$ satisfies ${\mathbf{T}_\alpha}(C^+)$ with
\[
C^+=\max \bigl( \bigl((p_\alpha-1)A \bigr)^{r_\alpha-1} ;
\bigl((p_\alpha-1)A \bigr)^{p_\alpha-1} \bigr).
\]
\item[(ii)] If $\mu$ satisfies (\ref{LSIminusA}) for some
$A>0$, then $\mu$ satisfies ${\mathbf{T}_\alpha}(C^-)$ with
\[
C^-= \bigl(1+(p_\alpha-1)A \bigr)^{p_\alpha-r_\alpha}\bigl((p_\alpha
-1)A\bigr)^{r_\alpha-1}.
\]
The numbers $1\leq r_\alpha\leq p_\alpha$, $p_\alpha>1$ are defined
by \eqref{defrp}.
\end{longlist}
\end{thmm}

Let us comment on this theorem. First observe that $C^+$ and $C^-$ are
of the same order since
\[
C^+\leq C^-\leq 2^{p_\alpha-r_\alpha} C^+.
\]

For the quadratic case $\alpha_2(x)=x^2$, the constants reduce to $C^+=C^-=A$.
This corresponds (when $X$ is a smooth Riemannian manifold)
to the usual Otto--Villani theorem~\cite{OV00}; see also~\cite{bgl}.
Let us mention that Lott and Villani~\cite{LV07} generalized\vadjust{\goodbreak}
the result from Riemannian manifolds to length spaces, for $\alpha_2(x)=x^2$,
with an adaptation of the Hamilton--Jacobi semigroup approach developed
by Bobkov, Gentil and Ledoux~\cite{bgl}.
But their statement requires additional assumptions, such as a local
Poincar\'e inequality, which are not needed in Theorem~\ref{OV-thm-intro}.

Also, in~\cite{ggm} the authors prove that the modified log-Sobolev
inequality, in Euclidean setting and with $\alpha=\alpha_{2,p}$, with
$1\leq p \leq2$, implies the corresponding transport inequality ${\mathbf{T}_\alpha}$,
again using the Hamilton--Jacobi approach~\cite{bgl}.

More recently, in~\cite{gozlan}, the first named author proved that
$\mathbf{LSI}^+(A)$ implies ${\mathbf{T}_2}(A)$ in the quadratic case $\alpha_2(x)=x^2$
and on an arbitrary complete and separable metric space. His
proof can be easily extended to more general functions such as $\alpha_p(x)=x^p$.
The scheme of proof is the following. Talagrand's
inequality ${\mathbf{T}_2}$ is first shown to be equivalent to
dimension-free Gaussian concentration. According to the well-known
Herbst argument (see, e.g.,~\cite{ledoux}), $\mathbf{LSI}^+$ implies
dimension-free Gaussian concentration, so it also implies ${\mathbf{T}_2}.$

Finally, as shown by Cattiaux and Guillin~\cite{cattiaux-guillin}, we
mention that the Talagrand transport-entropy inequality ${\mathbf{T}_2}$
does not imply, in general, the log-Sobolev inequality. Hence, there is
no hope to get an equivalence in the above theorem.

However, the $(\tau)$-log-Sobolev inequality appears to be equivalent
to the transport-entropy inequality.
This is the main result of this paper.

\begin{thmm} \label{GRS-thm-intro}
Let $\mu$ be a probability measure on $X$, and $\alpha$ a Young
function satisfying the $\Delta_2$-condition, and let $p_\alpha>1$ be
defined by \eqref{defrp}.
The following statements are equivalent:
\begin{longlist}[(1)]
\item[(1)] there exists $C$ such that $\mu$ satisfies ${\mathbf{T}_\alpha}(C)$;
\item[(2)] there exist $\lambda$, $A >0$ such that $\mu$ satisfies
(\ref{tauminusLSI}). 
\end{longlist}
Moreover, the constants are related in the following way:
\begin{eqnarray*}
&&(1)\quad\Rightarrow\quad(2)\qquad \mbox{for any } \lambda\in(0,1/C) \mbox{ and } A=
\frac{1}{1-\lambda C};
\\
&&(2)\quad\Rightarrow\quad(1)\qquad\mbox{with } C= \frac{1}{\lambda} \kappa_{p_\alpha}
\max(A;1)^{p_\alpha-1},
\end{eqnarray*}
where $\kappa_{p_\alpha}=\frac{p_\alpha^{p_\alpha(p_\alpha
-1)}}{(p_\alpha-1)^{(p_\alpha-1)^2}}$.
\end{thmm}

Such a characterization appeared for the first time in~\cite{GRS11},
in a Euclidean setting and with $\alpha$ between linear and quadratic.
Here our result is valid, not only for a wider family of Young
functions $\alpha$, but also on very general metric spaces.

Due to its functional form, it is easy to prove a perturbation lemma
for the inequality $(\tau)-\mathbf{LSI}_\alpha$.
This leads to the following general Holley--Stroock perturbation result
for transport-entropy inequalities whose proof is given in Section~\ref{hs}.\vadjust{\goodbreak}

\begin{thmm} \label{holley}
Let $\mu$ be a probability measure on $X$ and $\alpha$ a Young
function satisfying the $\Delta_2$-condition, and let $p_\alpha>1$ be
defined by \eqref{defrp}.
Assume that $\mu$ satisfies ${\mathbf{T}_\alpha}(C)$ for some constant
$C>0$. Then, for any bounded function $\varphi\dvtx X \to\mathbb{R}$,
the measure $d\tilde\mu= \frac{1}{Z} e^\varphi \,d\mu$ (where $Z$
is the normalization constant)
satisfies ${\mathbf{T}_\alpha} (\widetilde{C} )$, with
\[
\widetilde{C}=\widetilde{\kappa}_{p_\alpha} C e^{(p_\alpha
-1)\mathrm{Osc}(\varphi)},
\]
where $\Osc(\varphi):=\sup\varphi- \inf\varphi$, and $\widetilde
{\kappa}_{p_\alpha}=\frac{p_\alpha^{p_\alpha^2}}{(p_\alpha
-1)^{p_\alpha(p_\alpha-1)}}$.
\end{thmm}

This theorem fully extends the previous perturbation result~\cite{GRS11}, Corollary~1.8, obtained in a Euclidean setting and for a Young
function $\alpha$ such that $\alpha'$ is concave. Namely, for such an
$\alpha$, the function $\alpha(x)/x^2$ is nonincreasing~\cite{GRS11}, Lemma 5.6, and so $p_\alpha\leq2$.

The paper is divided into five sections and one \hyperref[app]{Appendix}. Section \ref
{preliminaries} is dedicated to some preliminaries. In particular we
will give
a characterization of transport-entropy inequalities (close from Bobkov
and G\"otze one) that might be of independent interest, and that is one
of the main ingredients in our proofs.
For the sake of completeness, we also recall how the transport-entropy
inequality ${\mathbf{T}_\alpha}$ implies the $(\tau)$-log-Sololev
inequality $(1) \Rightarrow(2)$ of Theorem
\ref{GRS-thm-intro}; this argument had been first used in~\cite{S00}
and then in~\cite{GRS11}. In Section~\ref{GRS-section},
we prove the other direction: the $(\tau)$-log-Sololev inequality
implies the transport-entropy inequality ${\mathbf{T}_\alpha}$. In Section~\ref{OV-section},
we give the proof of the generalized Otto--Villani
result, Theorem
\ref{OV-thm-intro}. The proof of the Holley--Stroock perturbation
result is given in Section~\ref{hs}. Finally, most of the technical
results needed on Young functions are proved in the \hyperref[app]{Appendix}.

\section{Preliminaries}\label{preliminaries}

In this section, we first recall the proof of the first half of Theorem
\ref{GRS-thm-intro}, namely ${\mathbf{T}_\alpha}\Rightarrow(\tau
)-\mathbf{LSI}_\alpha$. In a second part, we give a useful
``dimensional'' refinement of the characterization of transport-entropy
inequalities by Bobkov and G\"otze~\cite{BG99}. This characterization
provides sufficient conditions for the transport-entropy inequality to
hold. These are the same conditions as those obtained in the proofs of
$\mathbf{LSI}_\alpha^\pm\Rightarrow{\mathbf{T}_\alpha}$ and
$(\tau)-\mathbf{LSI}_\alpha\Rightarrow{\mathbf{T}_\alpha}$.

\subsection{\texorpdfstring{From transport entropy to $(\tau)$-log-Sobolev inequality}
{From transport entropy to (tau)-log-Sobolev inequality}}
In~\cite{GRS11}, Theorem~2.1, we proved
the following result which is the first half [$(1)$ $\Rightarrow$
$(2)$] of Theorem~\ref{GRS-thm-intro}. For the sake of completeness,
its short proof is recalled below.

\begin{thmm}[({\cite{GRS11}})] \label{grs}
Let $\mu$ be a probability measure on $X$ and $\alpha$ a Young function.
If $\mu$ satisfies ${\mathbf{T}_\alpha}(C)$ for some constant $C>0$, then,
for all $\lambda\in(0,1/C)$,
$\mu$ satisfies $(\tau)-\mathbf{LSI}_\alpha(\lambda, \frac
{1}{1-\lambda C})$.\vadjust{\goodbreak}
\end{thmm}
\begin{pf}
Take $f\dvtx X\to\R$ a locally Lipschitz function such that $\int e^f
\,d\mu=1$, and consider the probability $\nu_f$ defined by $\nu_f=e^f
\mu.$ Jensen's inequality implies that $\int f \,d\mu\leq0$. So, if
$\pi$ is an optimal coupling between $\nu_f(dx)$ and $\mu(dy)$, then
it holds
\[
H(\nu_f|\mu)=\int fd\nu_f\leq\int fd\nu_f-
\int f \,d\mu=\int f(x)-f(y) \pi(dx\,dy).
\]
By definition of $Q_\alpha^\lambda f$,
\[
f(x)-f(y)\leq f(x)-Q_\alpha^\lambda f(x)+\lambda\alpha
\bigl(d(x,y)\bigr).
\]
Since $\pi$ is optimal, it holds
\[
H(\nu_f|\mu)\leq\int\bigl(f-Q_\alpha^\lambda f\bigr)
\,d\nu_f + \lambda \mathcal{T}_\alpha(\nu_f,
\mu).
\]
Plugging the inequality $\mathcal{T}_\alpha(\nu_f,\mu)\leq CH(\nu_f|\mu)$
into the inequality above with $\lambda<1/C$ immediately
gives $(\tau)-\mathbf{LSI}_\alpha(\lambda, \frac{1}{1-\lambda C})$.
\end{pf}


\subsection{Sufficient conditions for transport-entropy inequality}

In this section, we show that bounds on the exponential moment of the
tensorized inf-convolution or sup-convolution operator allow us to
recover the transport-entropy inequality; see Proposition~\ref{p11}
and Corollary~\ref{cor} below. These results are a key argument to
recover the transport-entropy inequality, either from a modified
log-Sobolev or from a $(\tau)$-log-Sobolev inequality.

It is known, since the work by Bobkov and G\"otze~\cite{BG99} (see
also~\cite{villani,gozlan-leonard}), that transport-entropy
inequalities have the following dual formulation.

\begin{prop}[(\cite{BG99})] \label{bg}
Let $\mu$ be a probability measure on a complete and separable metric
space $(X,d)$. Then the following are equivalent:
\begin{longlist}[(ii)]
\item[(i)] the probability measure $\mu$ satisfies ${\mathbf{T}_\alpha}(1/c)$;
\item[(ii)] for any bounded continuous function $f \dvtx  X \to\mathbb
{R}$, it holds
\[
\int e^{c Q_\alpha f} \,d\mu\leq e^{c \mu(f)} .
\]
\end{longlist}
\end{prop}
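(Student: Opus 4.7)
The proposition is a classical duality, and the plan is to derive both directions in parallel by combining two standard variational representations: the Kantorovich--Rubinstein duality for the optimal transport cost $\mathcal{T}_\alpha$, and the Donsker--Varadhan (Gibbs) variational formula for the relative entropy $H(\,\cdot\,|\mu)$. Both identities will be invoked as black boxes; they are available under the running hypotheses that $(X,d)$ is complete and separable with compact closed balls, and that the cost $(x,y) \mapsto \alpha(d(x,y))$ is continuous and non-negative.

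The first key formula is Kantorovich duality for the cost $\alpha \circ d$, which may be written
\begin{equation*}
\mathcal{T}_\alpha(\nu,\mu) = \sup_{f} \left\{ \int Q_\alpha f \, d\nu - \int f \, d\mu \right\},
\end{equation*}
the supremum running over bounded continuous $f$, with the pair $(Q_\alpha f, f)$ admissible by the very definition of the inf-convolution $Q_\alpha$. The second is the Gibbs variational formula, valid for any bounded measurable $g$:
\begin{equation*}
\log \int e^g \, d\mu = \sup_{\nu \in \mathcal{P}(X)} \left\{ \int g \, d\nu - H(\nu|\mu) \right\}.
\end{equation*}

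Both implications will follow by chaining these two identities. For $(i) \Rightarrow (ii)$, the plan is to fix a bounded continuous $f$, use the admissibility half of Kantorovich to bound $\nu(Q_\alpha f) - \mu(f) \leq \mathcal{T}_\alpha(\nu,\mu)$, combine with the hypothesis $c\, \mathcal{T}_\alpha(\nu,\mu) \leq H(\nu|\mu)$ to obtain $c\,\nu(Q_\alpha f) - H(\nu|\mu) \leq c\,\mu(f)$ for every $\nu$, and then take the supremum in $\nu$ via Gibbs to conclude $\log \int e^{cQ_\alpha f}\,d\mu \leq c\,\mu(f)$. For $(ii) \Rightarrow (i)$ the roles are reversed: one fixes $\nu$, uses Gibbs to write $c\,\nu(Q_\alpha f) - H(\nu|\mu) \leq \log \int e^{cQ_\alpha f}\,d\mu$, which is $\leq c\,\mu(f)$ by hypothesis, and then takes the supremum over bounded continuous $f$ using the full Kantorovich duality to arrive at $c\,\mathcal{T}_\alpha(\nu,\mu) \leq H(\nu|\mu)$, i.e.\ ${\bf T_\alpha}(1/c)$.

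The main point requiring care is that both dualities must be applied on the same class of test functions---bounded continuous $f$. That Kantorovich duality for $\mathcal{T}_\alpha$ is achieved as a supremum over bounded continuous functions, with the optimal first-coordinate potential realized by the $c$-transform $Q_\alpha f$, is the classical version of the theorem under our assumptions on $(X,d)$ and on the continuous non-negative cost $\alpha \circ d$ (cf.\ Villani's treatise on optimal transport). On the Gibbs side one needs $Q_\alpha f$ to be bounded whenever $f$ is, so that the exponent $g = cQ_\alpha f - c\,\mu(f)$ lies in the range of validity of the variational formula; this follows from the sandwich $\inf f \leq Q_\alpha f \leq f$. Beyond these two structural points the argument is a short and purely algebraic chase through the two variational formulas.
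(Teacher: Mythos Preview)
The paper does not prove this proposition; it is quoted directly from Bobkov and G\"otze \cite{BG99} (see also the survey \cite{gozlan-leonard}) and used as a black box. Your argument---combining the Kantorovich duality for $\mathcal{T}_\alpha$ with the Donsker--Varadhan variational formula for relative entropy---is exactly the standard proof of this dual formulation, and it is carried out correctly. In particular, you rightly observe that only the ``easy'' inequality of Kantorovich (admissibility of the pair $(Q_\alpha f,f)$) is needed for $(i)\Rightarrow(ii)$, whereas the full duality is required for $(ii)\Rightarrow(i)$, and that the boundedness of $Q_\alpha f$ (via $\inf f\leq Q_\alpha f\leq f$) ensures the Gibbs formula applies.
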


In the next proposition we show, using the law of large numbers, that
the bound in point (ii) can be relaxed as soon as it holds in any dimension.

\begin{prop}\label{p11}
Let $\mu$ be a probability measure on a complete and separable metric
space $(X,d)$.
Then the following are equivalent:
\begin{longlist}[(ii)]
\item[(i)] the probability measure $\mu$ satisfies ${\mathbf{T}_\alpha}(1/c)$;
\item[(ii)] there exist three constants $a$, $b$, $c>0$ such that
for any $n \in\mathbb{N}^*$, for any bounded continuous function $f \dvtx
X^n \to\mathbb{R}^+$, it holds
\[
\int e^{c Q_{\alpha,n} f} \,d\mu^{n} \leq a e^{b \mu^{n}(f)},\vadjust{\goodbreak}
\]
where
%
\begin{equation}
\label{Qdimn}\qquad Q_{\alpha,n} f(x)= \inf_{y \in X^n} \Biggl\{f(y) +\sum
_{i=1}^n \alpha\bigl(d(x_i,y_i)
\bigr) \Biggr\}\qquad \forall x=(x_1,\ldots,x_n)\in
X^n.
\end{equation}
\end{longlist}
\end{prop}

\begin{rem}
Note that the constants $a$ and $b$ do not play any role. On the other
hand, notice that $f$ is only assumed to be nonnegative.
\end{rem}

\begin{pf}
Observe that the transport-entropy inequality ${\mathbf{T}_\alpha}(1/c)$
naturally tensorises; see, e.g.,~\cite{gozlan-leonard}. Applying
Bobkov and G\"otze result above, we see that (i) implies (ii) with
$a=1$ and $b=c$.

Now let us prove that (ii) implies (i). For that purpose, fix a
bounded continuous function $f \dvtx  X \to\mathbb{R}$ with mean $0$ under
$\mu$ and, following~\cite{gozlan-roberto-samson-JFA} (see also \cite
{gozlan}), define~$g$ on $X^n$ as
$g(x)= \sum_{i=1}^n f(x_i)$, $x=(x_1,\dots,x_n) \in X^n$. Then,
\[
\biggl( \int e^{c Q_\alpha f} \,d\mu \biggr)^n = \int
e^{cQ_{\alpha,n} g} \,d\mu^{n} \leq \int e^{cQ_{\alpha, n} g_+} \,d
\mu^{n} \leq a e^{b \mu^{n}(g_+)},
\]
where, as usual, $g_+ = \max(g,0)$. It follows that
\[
\int e^{c Q_\alpha f} \,d\mu\leq a^{{1}/{n}} e^{b \mu^{n}(g_+)/n} .
\]
Now, according to the strong law of large numbers,
$\frac{1}{n} \sum_{i=1}^n f(X_i) \to0$ in $\mathbb{L}^1$, where the
$X_i$'s are i.i.d. random variables with common law $\mu$. Hence,
\[
\mu^{n} \biggl( \frac{g_+}{n} \biggr) \leq\mathbb{E} \Biggl(
\Biggl\llvert \frac{1}{n} \sum_{i=1}^n
f(X_i) \Biggr\rrvert \Biggr) \to0
\]
when $n$ tends to infinity. We conclude that
%
\begin{equation}
\label{lipschitz} \int e^{c Q_\alpha f} \,d\mu\leq1 = e^{c \mu(f)} .
\end{equation}
Since the latter is invariant by changing $f$ into $f+e$ for any
constant $e$, we can remove the assumption $\mu(f)=0$. This ends the proof.
\end{pf}

The next corollary will be used in the proofs of Theorems \ref
{OV-thm-intro} and~\ref{GRS-thm-intro}. It gives a sufficient
condition for
the transport-entropy inequality ${\mathbf{T}_\alpha}$ to hold.

\begin{cor} \label{cor}
Let $\mu$ be a probability measure on a complete and separable metric
space $(X,d)$.
Define, for all $f\dvtx X^n\to\R$,
%
\begin{equation}
\label{Pdimn}  P_{\alpha,n} f(x)= \sup_{y \in X^n} \Biggl\{f(y) - \sum
_{i=1}^n \alpha\bigl(d(x_i,y_i)
\bigr) \Biggr\} \qquad\forall x=(x_1,\ldots,x_n)\in
X^n.\hspace*{-35pt}
\end{equation}
Assume that there exist some constants $\tau$, $a$, $b>0$ and $c \in
[0,1)$ such that, for all integer $n \in\mathbb{N}^*$ and
all bounded continuous functions $f \dvtx  X^n \to\mathbb{R}^+$, it holds
\[
\int e^{\tau P_{\alpha,n}f} \,d\mu^n \leq ae^{b \mu^n(P_{\alpha,n}f)}
e^{\tau c\|f\|_\infty} .
\]
Then $\mu$ satisfies ${\mathbf{T}_\alpha} (\frac1{\tau(1-c)} )$.
\end{cor}

\begin{pf}
Let $n\in\N^*$ and take a bounded continuous function $g \dvtx  X^n \to
\mathbb{R}^+$.
In order to apply Proposition~\ref{p11}, we need to remove the
spurious term $\|f\|_\infty$.
Observe on the one hand that for any $\beta\in(0,\tau(1-c))$, one has
\begin{eqnarray*}
\int e^{\beta Q_{\alpha,n} g} \,d\mu^n&=& 1 + \beta\int_0^{+\infty}
e^{\beta r} \mu^n( Q_{\alpha,n}g \geq r) \,dr
\\
&= & 1 + \beta\int_0^{+\infty} e^{\beta r}
\mu^n\bigl( \min(Q_{\alpha
,n}g, r) \geq r\bigr) \,dr .
\end{eqnarray*}
On the other hand, set $f= \min(Q_{\alpha,n}g, r)$. It is bounded,
nonnegative and satisfies $\|f\|_\infty\leq r$.
Moreover, we have $P_{\alpha,n}(Q_{\alpha,n} g)\leq g$. Indeed,
\[
P_{\alpha,n}(Q_{\alpha,n}g) (x)=\sup_{y\in X^n}
\inf_{z\in X^n} \Biggl\{f(z)+\sum_{i=1}^n
\alpha\bigl(d(y_i,z_i)\bigr)-\sum
_{i=1}^n\alpha \bigl(d(x_i,y_i)
\bigr) \Biggr\},
\]
and the inequality follows by taking $z=x$.
Hence
$\mu^n(P_{\alpha,n}f) =\break  \mu^n(P_{\alpha,n}(Q_{\alpha,n} g))\leq
\mu^n(g)$.
Therefore, since $P_{\alpha,n}f \geq f$, by Chebyshev's inequality and
the assumption, we have
\begin{eqnarray*}
\mu^n\bigl(\min(Q_{\alpha,n}g, r)\geq r\bigr) & \leq&
\mu^n(P_{\alpha,n} f \geq r) \leq e^{-\tau r}\int
e^{\tau P_{\alpha,n}f} \,d\mu^n\\
& \leq& a e^{b \mu^n(P_{\alpha,n}f)} e^{-\tau(1-c)r}
\\
& \leq& a e^{b \mu^n(g)} e^{-\tau(1-c)r}.
\end{eqnarray*}
Consequently, we get
\begin{eqnarray*}
\int e^{\beta Q_{\alpha,n} g} \,d\mu^n & \leq& 1+\beta a e^{b \mu^n(g)}
\int_0^{+\infty} e^{-(\tau(1-c)-\beta)r} \,dr \\
&=& 1 +
\frac{\beta a}{\tau(1-c)-\beta} e^{b \mu^n(g)}
\\
& \leq& \frac{\tau(1-c) + \beta(a-1)}{\tau(1-c)-\beta} e^{b \mu^n(g)} .
\end{eqnarray*}
Finally, Proposition~\ref{p11} provides that $\mu$ satisfies
${\mathbf{T}_\alpha}(1/\beta)$. Optimizing over $\beta$ leads to the expected result.
\end{pf}

\section{From modified log-Sobolev inequality to transport-entropy
inequality} \label{OV-section}

In this section we prove Theorem~\ref{OV-thm-intro}.
We have to distinguish between the modified log-Sobolev inequalities
plus and minus. As in~\cite{gozlan}, the proofs of Theorems \ref
{OV-thm-intro} and~\ref{GRS-thm-intro} use as a main ingredient the
stability of log-Sobolev-type inequalities under tensor products.

Let us recall this tensorisation property. The entropy functional
enjoys the following well-known sub-additivity property (see, e.g.,
\cite{ane}, Chapter 1): if $h\dvtx X^n\to\R^+$,
%
\begin{equation}
\label{subadditivity} \ent_{\mu^n}(h)\leq\sum
_{i=1}^n \int\ent_{\mu}
(h_{i,x}) \,d\mu^n(x),
\end{equation}
where, for all $x\in X^n$, the application $h_{i,x}$ is the $i$th
partial application defined by
\[
h_{i,x}(u)=h(x_{1},\ldots,x_{i-1},u,x_{i+1},
\ldots,x_{n})\qquad \forall u\in X.
\]
Let us say that $h\dvtx X^n\to\R^+$ is \textit{separately locally
Lipschitz}, if all the partial applications $h_{i,x}$ $1\leq i\leq n$,
$x\in X^n$ are locally Lipschitz on $X.$
Now, suppose that a probability $\mu$ on $X$ verifies
(\ref{LSIplusA}) for some $A>0.$ Then, using \eqref{subadditivity},
we easily conclude that $\mu^n$ enjoys the following inequality:
%
\begin{equation}
\label{mlsi+} \ent_{\mu^n}\bigl(e^f\bigr)\leq A\int\sum
_{i=1}^n \alpha^*\bigl(\bigl|
\nabla_i^+ f\bigr|\bigr) e^f \,d\mu^n 
\end{equation}
for all functions $f\dvtx X^n\to\R$ separately locally Lipschitz, where
$|\nabla_{i}^+f|(x)$ is defined by
\[
\bigl|\nabla_{i}^+f\bigr|(x)=\bigl|\nabla^+ f_{i,x}\bigr|(x_{i})=
\limsup_{y\to x_{i}} \frac{[f(x_{1},\ldots,x_{i-1},y,x_{i+1},\ldots,x_{n})-f(x)]_{+}}{d(y,x_{i})}.
\]
The same property holds for $\mathbf{LSI}^-_\alpha$.

\subsection{Modified log-Sobolev inequality plus}
The first part of Theorem~\ref{OV-thm-intro}, that we restate below,
says that the modified log-Sobolev inequality
$\mathbf{LSI}^+_\alpha$ implies
the transport-entropy inequality ${\mathbf{T}_\alpha}$. In fact we shall
prove the following, slightly stronger, result. To any Young function
$\alpha$, we associate a function $\xi_\alpha$ defined by
%
\begin{equation}
\label{xi} \xi_\alpha(x):= \sup_{u >0} \frac{\alpha^*(x\alpha_+'(u))}{x\alpha(u)} ,\qquad x >
0,
\end{equation}
where $\alpha'_+$ is the right derivative of $\alpha$.
Note that $\xi_\alpha$ is nondecreasing and may take infinite values.

\begin{thmm}\label{OV-thm}
Let $\mu$ be a probability measure on $X$ and $\alpha$ a Young
function satisfying the $\Delta_2$-condition.
If $\mu$ satisfies (\ref{LSIplusA}) for some constant $A>0$,
then~$\mu$ satisfies ${\mathbf{T}_\alpha}(1/t_A)$ with $t_A=\sup\{t \in
\R^+ ; \xi_\alpha(t) < 1/A \}$.
\end{thmm}

The following lemma gives an estimation of $\xi_\alpha$.
%
\begin{lem}\label{lemestimationxi}
Let $\alpha$ be a Young function satisfying the $\Delta_2$-condition,
and let $1\leq r_\alpha\leq p_\alpha$, $p_\alpha>1$ be the numbers
defined by \eqref{defrp}. Then, it holds
%
\begin{equation}
\label{eqestimationxi} \xi_\alpha(x)\leq(p_\alpha-1)\max
\bigl(x^{{1}/{(p_\alpha-1)}}; x^{{1}/{(r_\alpha-1)}} \bigr) \qquad\forall x>0,
\end{equation}
with the convention $x^\infty=0$ if $x\leq1$ and $\infty$ otherwise.
\end{lem}
The proof of this result is in the \hyperref[app]{Appendix}.

Using Lemma~\ref{lemestimationxi}, we easily derive point (i) of Theorem \ref
{OV-thm-intro}, with the explicit constant $C^+=\max (
((p_\alpha-1)A )^{r_\alpha-1} ; ((p_\alpha-1)A
)^{p_\alpha-1} ).$

Before turning to the proof of Theorem~\ref{OV-thm}, let us say that
estimation \eqref{eqestimationxi} is satisfactory, at least for the
small values of $x$ (corresponding to the large values of~$A$), as we
show with the following exact calculation of $\xi_\alpha$, when
$\alpha$ is the function $\alpha_{p_1,p_2}$ defined by \eqref{alphap1p2}.
%
\begin{lem}\label{exactcalculation}
Let $p_1\geq2$ and $p_2>1$, and let $\alpha=\alpha_{p_1,p_2}$; then
$p_\alpha=\break\max(p_1,p_2)$, and it holds
\[
\xi_\alpha(x)=(p_\alpha-1)x^{{1}/{(p_\alpha-1)}} \qquad\forall x\leq1.
\]
Moreover, for $x\geq1$, it holds
\[
\xi_\alpha(x)=\cases{ %
\displaystyle p_1
\biggl(\frac{1}{q_2}x^{{1}/{(p_2-1)}}+ \biggl(\frac{1}{q_1}-
\frac{1}{q_2} \biggr)\frac{1}{x} \biggr), &\quad  $\mbox{if } p_1
\geq p_2,$
\vspace*{2pt}\cr
\displaystyle\max \bigl((p_1-1)x^{{1}/{(p_1-1)}};(p_2-1)x^{
{1}/{(p_2-1)}}
\bigr), & \quad$\mbox{if } p_1\leq p_2 ,$ }
\]
where $q_1=p_1/(p_1-1)$ and $q_2=p_2/(p_2-1).$
\end{lem}
The proof of this lemma is in the \hyperref[app]{Appendix}, too.

\begin{pf*}{Proof of Theorem~\ref{OV-thm}}
Our aim is to use Herbst's argument (see, e.g., \cite
{ledoux,helffer,ane}) together with Proposition~\ref{p11}.
Let $n\in\N^*$; according to Lemma~\ref{locallyLip} below, for any
bounded function $f\dvtx X^n\to\R$, the function $Q_{\alpha,n}f$ is
separately locally Lipschitz [recall that the inf-convolution operator
$Q_{\alpha,n}$ is defined by \eqref{Qdimn}].
Fix a nonnegative bounded continuous function $f\dvtx  X^n \to\mathbb
{R}^+$. Applying \eqref{mlsi+} to $tQ_{\alpha,n}f$, $t>0$, and using
Lemma~\ref{lemgradient+} below together with the fact that $f\geq0$,
one gets
\begin{eqnarray*}
\ent_{\mu^n} \bigl(e^{tQ_{\alpha,n}f} \bigr) &\leq& A\int\sum
_{i=1}^n \alpha^* \bigl(t \bigl|\nabla_i^+
Q_{\alpha,n} f\bigr| \bigr) e^{tQ_{\alpha,n}f} \,d\mu^n
\\
&\leq& A t \xi_\alpha(t) \int Q_{\alpha,n}f e^{tQ_{\alpha,n}f} \,d
\mu^n.
\end{eqnarray*}
Now, we proceed with the Herbst argument. Set $H(t)=\int e^{tQ_{\alpha
,n}f}\,d\mu^n$, $t>0$. Since
$\ent_{\mu^n} (e^{tQ_{\alpha,n}f} ) = tH'(t) - H(t) \log
H(t)$, the latter can be rewritten as
\[
\bigl(t-At\xi_\alpha(t)\bigr) H'(t)\leq H(t) \log H(t), \qquad t
>0.
\]
Set $W(t)=\frac{1}{t} \log(H(t))$, $t>0$, so that the previous
differential inequality reduces to
\[
W'(t) t\bigl(1-A\xi_\alpha(t)\bigr) \leq A
\xi_\alpha(t) W (t).
\]
Since $\lim_{t \to0} W(t)= \mu^n(Q_{\alpha,n}f)$, we get
\[
H(t) \leq\exp \bigl( t C(t) \mu^n(Q_{\alpha,n}f) \bigr)\qquad \forall
t\in(0,t_A ),
\]
where we set $C(t)=\exp\int_{0}^t \frac{A\xi_\alpha(u)}{u(1-A\xi_\alpha(u))} \,du$; thanks to Lemma~\ref{lemestimationxi} above, we
are guaranteed that $t_A>0$ and that $C(t)<\infty$ on $(0,t_{\alpha})$.
Since $Q_{\alpha,n}f \leq f$, we finally get
\[
\int e^{tQ_{\alpha,n}f}\,d\mu^n \leq e^{tC(t) \mu^n(f)}\qquad \forall t
\in(0,t_A ),
\]
which leads to the expected result, thanks to Proposition~\ref{p11}
[and after optimization over $t \in(0,t_A )$].
\end{pf*}

\begin{lem}\label{locallyLip}Let $\alpha$ be a Young function. For
any integer $n\in\N^*$, any bounded function $f\dvtx X^n\to\R$, the
function $Q_{\alpha,n}f$ is separately locally Lipschitz on $X^n.$
\end{lem}
\begin{pf}
Let $h=Q_{\alpha,n}f$; then, for all $x\in X^n$ and $1\leq i\leq n$,
it holds
\begin{eqnarray*}
h_{i,x}(u)&=&\inf_{y_{i}\in X} \biggl\{ \inf_{y_{1},\ldots
,y_{i-1},y_{i+1},\ldots,y_{n}} \biggl
\{f(y)+\sum_{j\neq i} \alpha \bigl(d(x_{j},y_{j})
\bigr) \biggr\}+\alpha\bigl(d(u,y_{i})\bigr) \biggr\}
\\
& =& Q_{\alpha} g(u),
\end{eqnarray*}
where $g\dvtx X\to\R$ is defined by the second infimum. Let us show that
$u\mapsto Q_{\alpha} g(u)$ is locally Lipschitz on $X.$
Observe that $g$ is bounded and define $r_{o}=\alpha^{-1}(2\|g\|_{\infty}).$ For all $u\in X$, and all $y\in X$ such that
$d(y,u)>r_{o}$, we have
\[
g(y)+\alpha\bigl(d(u,y)\bigr)>-\|g\|_{\infty}+\alpha(r_{o})=
\|g\|_{\infty}.
\]
Since $Q_{\alpha}g\leq\|g\|_{\infty}$, we conclude that $Q_{\alpha
}g(u)=\inf_{d(y,u)\leq r_{o}}  \{g(y)+\alpha(d(u,y)) \}.$
Let $u_{o}\in X$, and let $B_{o}$ be the closed ball of center $u_{o}$
and radius $2r_{o}$. If $u\in B_{o}$, then $Q_{\alpha}g(u)=\inf_{y\in
B_{o}} \{g(y)+\alpha(d(u,y)) \}.$ Now, if $y\in B_{o}$, we
see that for all $u,v\in B_{o}$,
\begin{eqnarray*}
&&\bigl|\alpha\bigl(d(u,y)\bigr)-\alpha\bigl(d(v,y)\bigr)\bigr|\\
&&\qquad\leq\bigl |d(v,y)-d(u,y)\bigr|
\max_{t\in
[0,1]} \alpha'_+\bigl(td(u,y)+(1-t)d(v,y)\bigr)
\\
&&\qquad\leq L_{o}d(u,v),
\end{eqnarray*}
with $L_{o}=\alpha_+'(4r_{o}).$ The map $B_{o}\to\R\dvtx  u\mapsto
Q_{\alpha}g(u)$ is an infimum of $L_{o}$-Lipschitz functions on
$B_{o}$, so it is $L_{o}$-Lipschitz on $B_{o}$. This ends the proof.
\end{pf}

\begin{lem} \label{lemgradient+}
Let $\alpha$ be a Young function. For any integer $n$, any $t \geq0$
and any bounded continuous function $f \dvtx  X^n \to\mathbb{R}$,
\[
\sum_{i=1}^n \alpha^* \bigl(t \bigl|
\nabla_i^+ Q_{\alpha,n}f\bigr| \bigr) \leq t\xi_\alpha(t)
\bigl(Q_{\alpha,n}f(x)-f\bigl(y^x\bigr)\bigr),
\]
where $y^x\in X^n$ is any point such that $Q_{\alpha,n}f(x)=f(y^x)+
\sum_{j=1}^n \alpha(d(x_j,y^x_j)).$
\end{lem}
\begin{pf}
Fix $n$, $t \geq0$ and a bounded function $f \dvtx  X^n \to\mathbb{R}^+$.
For $x=(x_1,\dots,x_n) \in X^n$, $i\in\{1,\dots,n\}$ and $z \in X$,
we shall use the following notation:
\[
{\bar x}^i z =(x_1,\dots,x_{i-1},z,x_{i+1},
\dots,x_n).
\]
Let $x\in X^n$; since $f$ is bounded continuous and closed balls in $X$
are assumed to be compact, it is not difficult to show that there
exists $y^x \in X^n$ such that
\[
Q_{\alpha,n}f(x)=f\bigl(y^x\bigr)+ \sum
_{j=1}^n \alpha\bigl(d\bigl(x_j,y^x_j
\bigr)\bigr).
\]
For all $z\in X$ and all $1\leq i\leq n$, we have also $Q_{\alpha
,n}f(\bar{x}^{i}z)\leq f(y^x)+\break\sum_{j\neq i}\alpha
(d(x_{j},  y^x_{j}))+\alpha(d(z,y^x_{i})).$ Since the maps $u\mapsto
[u]_{+}$ and $\alpha$ are nondecreasing, it holds
\begin{eqnarray*}
\bigl[Q_{\alpha,n}f\bigl(\bar{x}^iz\bigr)-Q_{\alpha,n}f(x)
\bigr]_{+} &\leq&\bigl[\alpha \bigl(d\bigl(z,y^x_{i}
\bigr)\bigr) - \alpha\bigl(d\bigl(x_i,y_{i}^x
\bigr)\bigr) \bigr]_{+}
\\
&\leq&\bigl[\alpha \bigl(d(z,x_{i}) +d\bigl(x_{i},y^x_{i}
\bigr)\bigr) - \alpha\bigl(d\bigl(x_i,y_{i}^x
\bigr)\bigr) \bigr]_{+}.
\end{eqnarray*}
Therefore,
\begin{eqnarray*}
\bigl|\nabla_i^+ Q_{\alpha,n}f\bigr|(x)&\leq& \limsup_{z\to x_{i}}
\frac{
[\alpha(d(z,x_i)+d(x_i,y^x_i))-\alpha(d(x_i,y^x_i))
]_+}{d(z,x_i)}
\\
&=& \alpha_+'\bigl( d\bigl(x_i,y^x_i
\bigr)\bigr).
\end{eqnarray*}
Hence, by the very definition of $\xi_\alpha$,
\begin{eqnarray*}
\sum_{i=1}^n \alpha^* \bigl(t \bigl|
\nabla_i^+ Q_{\alpha,n}f\bigr| \bigr) &\leq& \sum
_{i=1}^n \alpha^* \bigl(t\alpha_+'
\bigl( d\bigl(x_i,y^x_i\bigr)\bigr) \bigr)
\\
& \leq& t\xi_\alpha(t) \sum_{i=1}^n
\alpha \bigl( d\bigl(x_i,y^x_i\bigr) \bigr)
\\
& =& t\xi_\alpha(t) \bigl( Q_{\alpha,n}f(x) - f
\bigl(y^x\bigr) \bigr) .
\end{eqnarray*}
\upqed\end{pf}

\subsection{Modified log-Sobolev inequality minus}\label{lsiminus}

In this section we prove the second part of Theorem \ref
{OV-thm-intro}, that we restate (in a slightly stronger form) below,
namely that the modified log-Sobolev inequality
minus $\mathbf{LSI}^-_\alpha$ implies
the transport-entropy inequality ${\mathbf{T}_\alpha}$.
Let us define [recall the defintion of $\xi_\alpha$ given in \eqref{xi}]
\[
t_\alpha=\sup \bigl\{ t\in\R^+, \xi_\alpha(t)<+\infty \bigr\}.
\]
Note that, by Lemma~\ref{lemestimationxi}, if $\alpha$ satisfies
the $\Delta_2$-condition, then $t_\alpha\geq1$.
%
\begin{thmm}\label{OV-thm-}
Let $\mu$ be a probability measure on $X$ and $\alpha$ a Young
function satisfying the $\Delta_2$-condition.
If $\mu$ satisfies (\ref{LSIminusA}) for some constant $A>0$,
then~$\mu$ satisfies ${\mathbf{T}_\alpha}(B^-)$ with
$B^-=\lim_{t\rightarrow t_\alpha} \frac1{t} \exp \{\int_{0}^{t} \frac{A\xi_\alpha(u)}{u(1+A\xi_\alpha(u))} \,du \}$.
\end{thmm}

For more comprehension and to complete the proof of part (ii) of
Theorem~\ref{OV-thm-intro}, let us prove that $B^- \leq C^-$. If
$r_\alpha>1$, then by Lemma~\ref{lemestimationxi}, $t_\alpha
=+\infty$. Moreover, using that
$\frac{1}{t}=\exp \{- \int_1^t \frac{1}{u} \,du  \}$, $t
\geq1$, one has
\begin{eqnarray*}
\log B^-&=& \int_0^1 \frac{A\xi_\alpha(u)}{u(1+A\xi_\alpha(u))} \,du
- \int_1^{+\infty}\frac{1}{u(1+A\xi_\alpha(u))} \,du
\\
&\leq& \int_0^1\frac{A(p_\alpha-1)u^{{1}/{(p_\alpha
-1)}}}{u(1+A(p_\alpha-1)u^{{1}/{(p_\alpha-1)}})} \,du \\
&&{}-
\int_1^{+\infty}\frac{1}{u(1+A(p_\alpha-1)u^{{1}/{(r_\alpha-1)}})} \,du
\\
&=& \log C^-,
\end{eqnarray*}
with
\[
C^-= \bigl(1+A(p_\alpha-1)\bigr)^{p_\alpha-r_\alpha}\bigl(A(p_\alpha-1)
\bigr)^{r_\alpha-1}.
\]
When $r_\alpha=1$, since $t_\alpha\geq1$ and using the fact that the function
\[
t\rightarrow\frac1{t} \exp \biggl\{\int_{0}^{t}
\frac{A\xi_\alpha
(u)}{u(1+A\xi_\alpha(u))} \,du \biggr\}
\]
is nonincreasing, we get
\[
B^-\leq\exp \biggl\{\int_{0}^{1}
\frac{A\xi_\alpha(u)}{u(1+A\xi_\alpha(u))} \,du \biggr\}\leq\bigl(1+A(p_\alpha-1)
\bigr)^{p_\alpha-1}=C^-.
\]

\begin{pf*}{Proof of Theorem~\ref{OV-thm-}}
The proof of Theorem~\ref{OV-thm-} follows essentially the lines of
the proof of Theorem~\ref{OV-thm}.
Let $n\in\N^*$; thanks to the tensorisation property
of (\ref{LSIminusA}), it holds
%
\begin{equation}
\label{mlsi-} \ent_{\mu^n}\bigl(e^g\bigr)\leq A\int\sum
_{i=1}^n \alpha^*\bigl(\bigl|
\nabla_i^-g\bigr|\bigr) e^g \,d\mu^n
\end{equation}
for any $g \dvtx  X^n \to\mathbb{R}$ separately locally Lipschitz and
bounded. Take a nonnegative bounded continuous function $f\dvtx  X^n \to
\mathbb{R}^+$. Recall that $P_{\alpha,n}f(x)=\sup_{y\in X^n}  \{
f(y) - \sum_{i=1}^n \alpha(d(x_i,y_i))  \}.$ Since $P_{\alpha
,n} f=-Q_{\alpha,n}(-f)$, it follows from Lemma~\ref{locallyLip}
that $P_{\alpha,n}f$ is separately locally Lipschitz. Applying \eqref
{mlsi-} to $g=tP_{\alpha,n}f$, $t>0$, one gets
\[
\ent_{\mu^n}\bigl(e^{tP_{\alpha,n}f}\bigr) \leq A\int\sum
_{i=1}^n \alpha^* \bigl(t \bigl|\nabla_i^-
P_{\alpha,n} f\bigr| \bigr) e^{tP_{\alpha,n}f} \,d\mu^n.
\]
Observe that $P_{\alpha,n} f=-Q_{\alpha,n}(-f)$ and that $|\nabla^-(-h)|=|\nabla^+h|$, for all $h\dvtx X\to\R$. So applying Lemma \ref
{lemgradient+}, we see that for all $x\in X^n$, there is some $y^x\in
X^n$ such that
\begin{eqnarray*}
\sum_{i=1}^n \alpha^* \bigl(t \bigl|
\nabla_i^- P_{\alpha,n} f\bigl| \bigr) (x)&=&\sum
_{i=1}^n \alpha^* \bigl(t \bigl|\nabla_i^+
Q_{\alpha
,n}(-f)\bigr| \bigr) (x)
\\
&\leq& t\xi_\alpha(t) \bigl(Q_{\alpha,n}(-f) (x)+f
\bigl(y^x\bigr) \bigr)
\\
&\leq& t\xi_\alpha(t) \bigl(\|f\|_\infty- P_{\alpha,n}f(x)
\bigr).
\end{eqnarray*}
So we get the following inequality:
\[
\ent_{\mu^n}\bigl(e^{tP_{\alpha,n}f}\bigr) \leq A t \xi_\alpha(t)
\int\bigl(\|f\|_\infty- P_{\alpha,n}f\bigr) e^{tP_{\alpha,n}f} \,d
\mu^n.
\]
As in the proof of Theorem~\ref{OV-thm}, we proceed with the Herbst argument. Set $H(t)=\int e^{tP_{\alpha,n}f}\,d\mu^n$,
$t \in(0,t_\alpha)$. Since
$\ent_{\mu^n}(e^{tP_{\alpha,n}f}) = tH'(t) - H(t) \log H(t)$, the
latter can be rewritten as
\[
\bigl(t+At\xi_\alpha(t)\bigr) H'(t)\leq H(t) \log H(t) +
A t \xi_\alpha(t)\| f\|_\infty H(t) \qquad\forall t
\in(0,t_\alpha).
\]
Set $W(t)=\frac{1}{t} \log H(t)$, $t\in(0,t_\alpha)$, so that the
previous differential inequality reduces to
\[
W'(t) t\bigl(1+A\xi_\alpha(t)\bigr) \leq-A
\xi_\alpha(t) W (t) + A \xi_\alpha(t)\|f\|_\infty.
\]
Set $c(t)=\exp \{-\int_{0}^t \frac{A\xi_\alpha(u)}{u(1+A\xi_\alpha(u))} \,du \}$ [which belongs to $(0,1)$
thanks to Lem\-ma~\ref{lemestimationxi}].
Since $\lim_{t \to0} W(t)= \mu^n(P_{\alpha,n}f)$, solving the
latter differential inequality, we easily get that for all $t \in
(0,t_\alpha)$,
\[
H(t) \leq e^{ t c(t) \mu^n(P_{\alpha,n}f) } e^{t \|f\|_\infty(1-c(t))} .
\]
Applying Corollary~\ref{cor} yields that ${\mathbf{T}_\alpha}(1/(tc(t)))$
holds for all $t \in(0,t_\alpha)$.
Observing that the function $t\rightarrow tc(t)$ is nondecreasing on
$(0,t_\alpha)$, the proof is completed by optimizing in $t$.
\end{pf*}

\section{\texorpdfstring{From $(\tau)$-log-Sobolev inequality to transport-entropy inequality}
{From (tau)-log-Sobolev inequality to transport-entropy inequality}} \label{GRS-section}

In this section, we prove the second part [$(2)\Rightarrow(1)$] of
Theorem~\ref{GRS-thm-intro}. Observe that ${\mathbf{T}_\alpha}(C/\lambda
)$ is equivalent to ${\mathbf{T}_{\lambda\alpha}}(C)$.
Hence, changing $\alpha$ into $\lambda\alpha$, we can restate the
first part of Theorem~\ref{GRS-thm-intro} as follows.\vadjust{\goodbreak}
%
\begin{thmm}\label{GRS-thm}
Let $\mu$ be a probability measure on $X$ and $\alpha$ a Young
function satisfying the $\Delta_2$-condition. Let $p_\alpha>1$ be
defined by \eqref{defrp}.
If $\mu$ satisfies $(\tau)-\mathbf{LSI}_\alpha(1,A)$ for some $A >0$,
then $\mu$ satisfies ${\mathbf{T}_\alpha}(C)$ with
\[
C=\kappa_{p_\alpha}\max(A,1)^{p_\alpha-1},
\]
where $\kappa_{p_\alpha}=\frac{p_\alpha^{p_\alpha(p_\alpha
-1)}}{(p_\alpha-1)^{(p_\alpha-1)^2}}$.
\end{thmm}

Two proofs are given below. The first one exactly follows the lines of
the proof of $\mathbf{LSI}_\alpha^-\Rightarrow{\mathbf{T}_\alpha}$, whereas
the second one uses the equivalence between transport-entropy
inequalities and dimension-free concentration established in \cite
{gozlan} together with a change of metric argument.

In each proof, the first step is to tensorise the $(\tau)$-log-Sobolev
inequality. Let $n\in\N^*$; using the sub-additivity property \eqref
{subadditivity} of the entropy functional, we see that
$(\tau)-\mathbf{LSI}_\alpha(1,A)$ implies that
%
\begin{equation}
\label{eq1proofGRS} \ent_{\mu^n}\bigl(e^h\bigr)\leq A \int
\sum_{i=1}^n \bigl(h-Q_{\alpha}^{(i)}h
\bigr) e^h \,d\mu^n \qquad\forall h\dvtx X^n\to
\mathbb{R} ,
\end{equation}
where $Q_{\alpha}^{(i)}$ is the inf-convolution operator with respect
to the $i$th coordinate, namely
\[
Q_\alpha^{(i)}h(x)=Q_\alpha(h_{i,x})
(x_i)=\inf_{y \in X} \bigl\{ h\bigl({\bar x}^i y
\bigr) + \alpha\bigl(d(x_i,y)\bigr) \bigr\}
\]
(using the notation introduced in Section~\ref{OV-section}).

As in the proof of Theorem~\ref{OV-thm-}, applying \eqref{eq1proofGRS} to $h=tP_{\alpha,n} g$, $t\geq0$ where $g$ belongs to some class
of functions, we get
%
\begin{equation}
\label{eq2proofGRS} \ent_{\mu^n}\bigl(e^{tP_{\alpha,n}g}\bigr)\leq A \int
\sum_{i=1}^n \bigl(tP_{\alpha,n}g-Q_{\alpha}^{(i)}
(tP_{\alpha,n}g ) \bigr) e^{tP_{\alpha,n}g} \,d\mu^n.
\end{equation}
As a main difference, the class of functions $g$ differs in each proof.
In the first one, $g$ is any nonnegative bounded separately locally
Lipschitz function, whereas in the second proof, $g$
is globally Lipschitz in some sense.

For both proofs, the next step is to bound efficiently the right-hand
side of \eqref{eq2proofGRS}, in order to use some Herbst argument.
This bound will be given by the following lemma.
%
\begin{lem} \label{petitlemmebis}
Let $\alpha$ be a Young function satisfying the $\Delta_2$-condition,
and let $p_\alpha>1$ be defined by \eqref{defrp}. For any bounded
continuous function $g \dvtx  X^n \to\mathbb{R}$, for any $x\in X^n$ and
$t\in[0,1)$,
\[
\sum_{i=1}^n \bigl(tP_{\alpha,n}g(x)
- Q_\alpha^{(i)}( tP_{\alpha
,n}g) (x) \bigr) \leq t
\varepsilon(t) \Biggl( \sum_{i=1}^n \alpha
\bigl(d\bigl(x_i,y_i^x\bigr)\bigr) \Biggr),
\]
where $y^x\in X^n$ is any point such that $P_{\alpha
,n}g(x)=g(y^x)-\sum_{i=1}^n \alpha(d(x_i,y_i^x))$, and where
\[
\varepsilon(t)=\frac{1}{ (1-t^{{1}/{(p_\alpha-1)}}
)^{p_\alpha-1}}-1 \qquad \forall t\in[0,1).
\]
\end{lem}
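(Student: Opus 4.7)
The plan is to reduce the inequality to a one-variable convexity--homogeneity estimate on $\alpha$, coordinate by coordinate. Fix $x \in X^n$ and let $y^x \in X^n$ be a maximizer in the definition of $P_{\alpha,n}g(x)$; its existence is guaranteed by the boundedness of $g$ and the compactness of closed balls in $X$, exactly as in the proof of Lemma~\ref{lem:gradient+}. For each coordinate $i$ and every $z \in X$, inserting $y^x$ as a candidate in the supremum defining $P_{\alpha,n}g(\bar x^i z)$ gives
$$P_{\alpha,n}g(\bar x^i z) \geq g(y^x) - \sum_{j \neq i} \alpha(d(x_j, y_j^x)) - \alpha(d(z, y_i^x)) = P_{\alpha,n}g(x) + \alpha(d(x_i, y_i^x)) - \alpha(d(z, y_i^x)).$$
Multiplying by $t$, adding $\alpha(d(x_i,z))$ and taking the infimum over $z$, one deduces
$$tP_{\alpha,n}g(x) - Q_\alpha^{(i)}(tP_{\alpha,n}g)(x) \leq \sup_{z \in X}\bigl\{t\alpha(d(z, y_i^x)) - \alpha(d(x_i, z))\bigr\} - t\alpha(d(x_i, y_i^x)).$$

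Writing $a_i := d(x_i, y_i^x)$ and using the triangle inequality $d(z, y_i^x) \leq d(z, x_i) + a_i$ together with the monotonicity of $\alpha$ on $\R^+$, the supremum over $z$ above is bounded by $\sup_{s \geq 0}\{t\alpha(s + a_i) - \alpha(s)\}$. After summing over $i$, the lemma therefore follows once one establishes the one-variable inequality
$$t\alpha(s + a) - \alpha(s) \leq \frac{t\,\alpha(a)}{\bigl(1 - t^{1/(p_\alpha-1)}\bigr)^{p_\alpha - 1}}, \qquad \forall\, s, a \geq 0,\ t \in [0,1),$$
since subtracting $t\alpha(a_i)$ from the right-hand side produces exactly the factor $t\varepsilon(t)\alpha(a_i)$ appearing in the conclusion.

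For this one-variable claim, I would exploit the exponent $p_\alpha$. Integrating the pointwise inequality $\alpha'_+(u)/\alpha(u) \leq p_\alpha/u$ (built into the definition of $p_\alpha$, and already used in Lemma~\ref{lem r p}) yields the homogeneity estimate $\alpha(cu) \leq c^{p_\alpha}\alpha(u)$ for every $c \geq 1$ and $u \geq 0$. Writing $s + a = \lambda(a/\lambda) + (1-\lambda)(s/(1-\lambda))$ for $\lambda \in (0,1)$, the convexity of $\alpha$ combined with this homogeneity bound applied with $c = 1/\lambda$ and $c = 1/(1-\lambda)$ yields
$$\alpha(s + a) \leq \lambda\,\alpha(a/\lambda) + (1-\lambda)\alpha\bigl(s/(1-\lambda)\bigr) \leq \frac{\alpha(a)}{\lambda^{p_\alpha - 1}} + \frac{\alpha(s)}{(1-\lambda)^{p_\alpha - 1}}.$$
The specific choice $\lambda = 1 - t^{1/(p_\alpha-1)}$ is calibrated precisely so that, after multiplying by $t$, the coefficient of $\alpha(s)$ becomes $1$ and cancels against $-\alpha(s)$ on the left-hand side; what remains is exactly the desired bound. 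The only genuinely delicate step in the whole argument is this sharp calibration of $\lambda$: the rest is essentially forced by the coordinate-wise structure of $P_{\alpha,n}$ and $Q_\alpha^{(i)}$ together with the triangle inequality and convexity of $\alpha$.
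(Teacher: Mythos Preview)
Your proof is correct and follows the same overall structure as the paper's: reduce coordinate-wise by bounding $P_{\alpha,n}g(\bar x^i z)$ from below via the candidate $y^x$, then optimize over $z$ after applying a triangle inequality. The only genuine difference lies in the final one-variable step. The paper passes to the auxiliary metric $d_\alpha = \alpha^{1/p_\alpha}\circ d$ of Lemma~\ref{lem change metric}, uses its triangle inequality to reduce to the pure power $u\mapsto u^{p_\alpha}$, and then computes $\sup_{r>0}\{t[(A+r)^{p_\alpha}-A^{p_\alpha}]-r^{p_\alpha}\}=t\varepsilon(t)A^{p_\alpha}$ exactly. You instead stay with $\alpha$ itself: the ordinary triangle inequality together with the homogeneity bound $\alpha(cu)\leq c^{p_\alpha}\alpha(u)$ for $c\geq 1$ (which is equivalent to $\alpha(u)/u^{p_\alpha}$ being non-increasing, the very fact used to prove Lemma~\ref{lem change metric}) and convexity yield the same inequality, and your calibrated choice $\lambda=1-t^{1/(p_\alpha-1)}$ reproduces the constant. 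Both routes rest on the same property of $p_\alpha$; yours is slightly more direct in that it avoids introducing $d_\alpha$, while the paper's version has the advantage of giving the exact value of the supremum in the power case, which it reuses elsewhere.
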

We postpone the proof of Lemma~\ref{petitlemmebis} to the end of the section.

\subsection{A first proof}
The first proof of Theorem~\ref{GRS-thm}
mimics the one of the implication $\mathbf{LSI}_\alpha^-\Rightarrow{\mathbf{T}_\alpha}$.
\begin{pf*}{Proof of Theorem~\ref{GRS-thm}}
Using \eqref{eq2proofGRS}, Lemma~\ref{petitlemmebis} ensures that
for every nonnegative locally Lipschitz bounded function $g$, for
every $t\in[0,1)$,
\[
\bigl(t+At\varepsilon(t)\bigr)H'(t) \leq H(t)\log H(t) + A t
\varepsilon(t) \| g\|_\infty H(t) ,
\]
where $H(t)=\int e^{tP_{\alpha,n}g} \,d\mu^n$.

Solving this differential inequality, exactly as in the proof of
Theorem~\ref{OV-thm-} (we omit details), leads to
\[
\int e^{P_{\alpha,n}g} \,d\mu^n \leq e^{c \mu^n(P_{\alpha,n}g)}
e^{\|
g\|_\infty(1-c)},
\]
with $c=1/C$,
\[
C=\exp\int_0^1 \frac{A \varepsilon(t)}{t(1+ A \varepsilon(t))} \,dt .
\]
The inequality ${\mathbf{T}_\alpha}(C)$ then follows from Corollary~\ref{cor}.

Now, let us estimate the constant $C$. By convexity, one has for every
$v\in[0,1]$,
\[
(1-v)-(1-v)^{p_\alpha}\leq(p_\alpha-1)v.
\]
This inequality easily implies that for all $t\in[0,1)$,
$\frac{ \varepsilon( t)}{t}\leq(p_\alpha-1) \varepsilon'(t).$
Consequently, we obtain for all $u\in[0,1)$,
\begin{eqnarray*}
\log C&\leq& (p_\alpha-1)\int_0^u
\frac{A \varepsilon'(t)}{1+ A
\varepsilon(t)} \,dt+ \int_u^1
\frac{A \varepsilon(t)}{t(1+ A
\varepsilon(t))} \,dt
\\
&\leq& (p_\alpha-1)\log\bigl(1+ A \varepsilon(u)\bigr)-\log u.
\end{eqnarray*}
Optimizing in $u$, we get
\begin{eqnarray*}
C & \leq &\inf_{u\in(0,1)} \frac{(1+ A \varepsilon(u))^{p_\alpha-1}}u\leq \inf_{u\in(0,1)}
\frac{(1+ \varepsilon(u))^{p_\alpha-1}}u\max (A,1)^{p_\alpha-1}
\\
& =& \kappa_{p_\alpha}\max(A,1)^{p_\alpha-1},
\end{eqnarray*}
with $\kappa_{p_\alpha}=\frac{p_\alpha^{p_\alpha(p_\alpha
-1)}}{(p_\alpha-1)^{(p_\alpha-1)^2}}$.
\end{pf*}

\subsection{A second proof} The idea of this second proof is to prove
the theorem in the particular case of the functions $\alpha_p(x)=x^p$
and then to treat the general case by a change of metric argument.
\subsubsection{${\mathbf{T}_p}$ inequalities}
Let us introduce some notation and definitions. When $\alpha(x)=\alpha_p(x)=|x|^p$, we will use the notation ${\mathbf{T}_p}(C)$ and
$(\tau )-\mathbf{LSI}_p$ instead of ${\mathbf{T}_{\alpha_p}}(C)$ and
$(\tau )-\mathbf{LSI}_{\alpha_p}$. Let $n\in\N^*$; a function $f\dvtx X^n\to\R$ is
said to be $(L,p)$-Lipschitz $L>0,p>1$, if
\[
\bigl|f(x)-f(y)\bigr|\leq L \Biggl(\sum_{i=1}^n
d^p(x_i,y_i) \Biggr)^{1/p} \qquad\forall
x,y\in X^n.
\]

We recall the following result from~\cite{gozlan}.
%
\begin{thmm}\label{gozlanp}
The probability $\mu$ verifies the transport-entropy inequality ${\mathbf{T}_p}(C)$, for some $C>0$ if and only if it enjoys the following
dimension free concentration property:
for all $n\in\N^*$ and all $f\dvtx X^n\to\R$ such that
\[
\bigl|f(x)-f(y)\bigr|\leq L \Biggl(\sum_{i=1}^n
d^p(x_i,y_i) \Biggr)^{1/p} \qquad\forall
x,y\in X^n
\]
for some $L>0$, it holds
\[
\mu^n\bigl(f\geq\mu^n(f)+u\bigr)\leq\exp
\bigl(-u^{p}/\bigl(L^pC\bigr)\bigr) \qquad\forall u\geq0.
\]
\end{thmm}
So to show that a ${\mathbf{T}_p}$ inequality holds, it is enough to prove
the right concentration inequality.

We will use the following result to estimate the right-hand side of
\eqref{eq2proofGRS}.
%
\begin{lem}\label{omegap}
Let $p>1$; there exists a constant $\omega_p\geq1$ such that for all
$n\in\N^*$ and all $(L,p)$-Lipschitz function $f\dvtx X^n\to\R$, and all
$x\in X^n$, the function
\[
X^n\to\R\dvtx y\mapsto f(y)-\sum_{i=1}^n
d^p(x_i,y_i)
\]
attains its maximum on the closed ball
\[
\Biggl\{y\in X^n ; \sum_{i=1}^n
d^p(x_i,y_i)\leq \biggl(\frac
{L}{\omega_p}
\biggr)^{q} \Biggr\},\qquad \mbox{with } q=\frac{p}{p-1}.
\]
When $(X,d)$ is geodesic (see below), then one can take $\omega_p=p.$
\end{lem}
Recall that $(X,d)$ is geodesic, if for all $x,y\in X$, there is a path
$(z_{t})_{t\in[0,1]}$ joining $x$ to $y$ and such that
$d(z_s,z_t)=|s-t|d(x,y)$, for all $s,t\in[0,1].$ This notion
encompasses the case of Riemannian manifolds.

The proof of the lemma is at the end of the section.
%
\begin{thmm}\label{GRS-p}
Let $p\geq2$; if $\mu$ verifies the $(\tau) - \mathbf{LSI}_{p}(1,A)$,
then it verifies ${\mathbf{T}_p}(C)$, with $C= (a_p \max(1 ; A)
)^{p-1},$ with $a_p=\inf_{t\in(0,1)} \{\frac{1}{t^{q-1}}
(1 +\frac{(p/\omega_p)^q}{p-1}\int_0^t \frac{\varepsilon(u)}{u}
\,du ) \}.$
\end{thmm}
%
\begin{rem} Let us compare the constants appearing in Theorems \ref
{GRS-thm} and~\ref{GRS-p} for $p=2$.
When $p=2$, Theorem~\ref{GRS-thm} gives $C_1=4\max(1;A)$, and Theorem
\ref{GRS-p} gives $C_2=a_2 \max(1;A)$. A simple calculation shows
that when $p=2$, $a_2=\inf_{s\in(0,1)} \{\frac{1-(2/\omega_2)^2\ln(1-s)}{s} \}.$ If $\omega_2=1$, then $a_2\simeq7,5$,
and $C_1$ is smaller than~$C_2$. But if $\omega_2=2$ [which is the
case, when $(X,d)$ is geodesic], then $a_2\simeq3,14,$ and $C_2$ is
smaller than $C_1.$
\end{rem}

\begin{pf*}{Proof of Theorem~\ref{GRS-p}}
Take a $(L,p)$-Lipschitz function $g\dvtx X^n\to\R.$
To bound the right-hand side of \eqref{eq2proofGRS}, we use Lemmas
\ref{petitlemmebis} and~\ref{omegap}.
\[
\sum_{i=1}^n \bigl(tP_{\alpha_p,n}g(x)
- Q_{\alpha_p}^{(i)}( tP_{\alpha_p,n}g) (x) \bigr) \leq t
\varepsilon(t) (L/\omega_p )^q.
\]
So, letting $H(t)=\int e^{tP_{\alpha_p,n}g} \,d\mu^n,$ \eqref{eq2proofGRS} provides
\[
tH'(t)-H(t)\log H(t)\leq At\varepsilon(t)H(t) (L/
\omega_p)^{q} \qquad\forall t\in[0,1).
\]
Equivalently, the function $K(t)=\frac{1}{t}\log H(t)$ verifies
$K'(t)\leq A(L/\omega_p)^{q}\frac{\varepsilon(t)}{t}.$ Since
$K(t)\to\mu^n(P_{\alpha_p,n}g)$ when $t\to0^+$, we conclude that
%
\begin{equation}
\label{eqproofGRS-p} \qquad\int e^{tP_{\alpha_p,n}g} \,d\mu^n\leq\exp \bigl( t
\mu^n(P_{\alpha
_p,n}g) + tA(L/\omega_p)^{q}k(t)
\bigr)\qquad \forall t\in[0,1),
\end{equation}
where $k(t)=\int_0^t \frac{\varepsilon(u)}{u} \,du,\ t\in[0,1).$
Since $g$ is $(L,p)$-Lipschitz, it holds
\begin{eqnarray*}
0\leq P_{\alpha_p,n}g(x)-g(x)&\leq&\sup_{y\in X^n} \Biggl\{ L \Biggl(\sum
_{i=1}^n d^p(x_i,y_i)
\Biggr)^{1/p}-\sum_{i=1}^n
d^p(x_i,y_i) \Biggr\}
\\
&=&\sup_{r\geq0}\bigl\{Lr-r^{p}\bigr\}=(p-1) (L/p
)^{q}.
\end{eqnarray*}
Plugging the inequalities $g\leq P_{\alpha_p,n}g$ and $\mu^n
(P_{\alpha_p,n}g )\leq\mu^n(g)+(p-1)  (L/p )^{q}$
into \eqref{eqproofGRS-p}, we get
\[
\int e^{tg} \,d\mu^n\leq\exp \bigl( t\mu^n(g)+t(p-1)
(L/p )^{q} + tA (L/\omega_p )^{q}k(t) \bigr)\qquad
\forall t\in[0,1).
\]
Applying this inequality to $g=f/t$ with $f$ a $(L,p)$-Lipschitz
function, we get
\begin{eqnarray*}
\int e^{f-\mu^n(f)} \,d\mu^n\leq\exp \biggl(\frac
{(L/p)^{q}}{t^{q-1}}
\bigl(p-1 +A (p/\omega_p)^q k(t) \bigr) \biggr)\qquad \forall t
\in(0,1).
\end{eqnarray*}
So, optimizing over $t\in(0,1)$ yields
%
\begin{equation}
\label{eq2altproof} \int e^{f-\mu^n(f)} \,d\mu^n\leq\exp
\bigl((L/p)^{q}(p-1) \max(1;A) a_{p} \bigr),
\end{equation}
with
\[
a_{p}=\inf_{t\in(0,1)} \biggl\{\frac{1}{t^{q-1}} \biggl(1 +
\frac
{(p/\omega_p)^q}{p-1}\int_0^t \frac{\varepsilon(u)}{u}
\,du \biggr) \biggr\}.
\]
Using Chebyshev's argument, we derive from \eqref{eq2altproof} that
\[
\mu^n\bigl(f\geq\mu^n(f)+u\bigr)\leq\exp
\bigl(-u^{p}/\bigl(L^{p}C\bigr)\bigr)\qquad \forall u\geq0,
\]
with
$C=  (a_p\max(A;1) )^{p-1}.$
Applying Theorem~\ref{gozlanp}, we conclude that $\mu$ verifies
${\mathbf{T}_p}(C)$.
\end{pf*}

\subsubsection{Extension via a change of metric}
A change of metric technique, which is explained in the lemma below,
enables us to reduce the study of the transport-entropy inequalities
${\mathbf{T}_\alpha}$ to the study of the inequalities ${\mathbf{T}_p}$, $p>1.$
%
\begin{lem}\label{lemchangemetric}
Let $\alpha$ be a Young function satisfying the $\Delta_2$-condition,
and let $p_\alpha>1$ be defined by \eqref{defrp}. The function
$x\mapsto\alpha(x)^{1/p_\alpha}$ is subadditive on~$\R^+$:
\[
\alpha^{1/p_\alpha}(x+y)\leq\alpha^{1/p_\alpha}(x)+\alpha^{1/p_\alpha}(y)
\qquad\forall x,y\in\R^+.
\]
As a consequence, $d_\alpha(x,y)=\alpha^{1/p_\alpha}(d(x,y))$,
$x,y\in X$ is a distance on $X$.
\end{lem}
The proof of Lemma~\ref{lemchangemetric} is at the end of the section.

\begin{pf*}{Proof of Theorem~\ref{GRS-thm}}
Let $\alpha$ be a Young function and $\mu$ a probability on $X$.
According to Lemma~\ref{lemchangemetric}, the function $d_\alpha
(x,y)=\alpha^{1/p_\alpha} (d(x,y))$, $x,y\in X$ is a metric on $X.$
Furthermore, it is clear that $\mu$ verifies $(\tau) -
\mathbf{LSI}_\alpha(1,A)$ [resp., ${\mathbf{T}_\alpha}(C)$] on $(X,d)$ if and only
if $\mu$ verifies $(\tau) - \mathbf{LSI}_{p_\alpha}(1,A)$ [resp., ${\mathbf{T}_{p_\alpha}}(C)$]
on $(X,d_\alpha)$. We immediately deduce from
Theorem~\ref{GRS-p} that if $\mu$ verifies $(\tau) - \mathbf{LSI}_\alpha
(1,A)$, then it verifies ${\mathbf{T}_{\alpha}}(C)$, with $C=(a_{p_\alpha
}\max(1 ;\break A))^{p_\alpha-1}$, where
\[
a_{p_\alpha}=\inf_{t\in(0,1)} \biggl\{\frac{1}{t^{q_\alpha-1}} \biggl(1 +
\frac{p_\alpha^{q_\alpha}}{p_\alpha-1}\int_0^t \frac
{\varepsilon(u)}{u}
\,du \biggr) \biggr\}
\]
(since $\omega_{p_\alpha}\geq1$) and $\varepsilon$ defined in Lemma
\ref{petitlemmebis}. Note that the constant $C$ obtained using this
approach is in general bigger than the constant obtained in the first
proof of Theorem~\ref{GRS-thm}.
\end{pf*}

\subsection{Proofs of the technical lemmas}
\mbox{}
\begin{pf*}{Proof of Lemma~\ref{petitlemmebis}}
Fix $t >0$, $x \in X^n$ and $i\in\{1,\ldots,n\}$.
Then
\[
tP_{\alpha,n}g(x) - Q_\alpha^{(i)}(tP_{\alpha,n}g)
(x) = \sup_{z \in
X} \bigl\{ \bigl(tP_{\alpha,n}g(x) -
tP_{\alpha,n}g\bigl({\bar x}^iz\bigr)\bigr) - \alpha
\bigl(d(x_i,z)\bigr) \bigr\}.\vadjust{\goodbreak}
\]
Let $y^x$ be such that $P_{\alpha,n}g(x) = g(y^x) - c(x,y^x)$, where
$c(x,y)=\sum_{i=1}^n \alpha(d(x_i,\break y_i)).$ By choosing $w=y^x$ in the
expression below, it holds
\begin{eqnarray*}
P_{\alpha,n}g(x) - P_{\alpha,n}g\bigl({\bar x}^iz\bigr) &
=& g\bigl(y^x\bigr) - c\bigl(x,y^x\bigr) -
\sup_{w \in X^n} \bigl\{ g(w) - c\bigl({\bar x}^iz,w\bigr) \bigr\}
\\
& \leq& c\bigl({\bar x}^iz,y^x\bigr) - c
\bigl(x,y^x\bigr)
\\
& = &\alpha\bigl(d\bigl(z,y^x_i\bigr)\bigr) - \alpha
\bigl(d\bigl(x_i,y^x_i\bigr)\bigr)
\\
&=&d_\alpha^{p_\alpha}\bigl(z,y^x_i
\bigr)-d_\alpha^{p_\alpha}\bigl(x_i,y^x_i
\bigr)
\\
& \leq&\bigl(d_\alpha\bigl(x_i,y^x_i
\bigr) + d_\alpha(x_i,z)\bigr)^{p_\alpha} -
d_\alpha^{p_\alpha}\bigl(x_i,y^x_i
\bigr) ,
\end{eqnarray*}
where, in the last line, we used the triangular inequality for the
distance $d_\alpha$ defined in Lemma~\ref{lemchangemetric}. Hence,
optimizing yields
\begin{eqnarray*}
&& tP_{\alpha,n}g(x) - Q_\alpha^{(i)}( tP_{\alpha,n}g)
(x)
\\
& &\qquad\leq \sup_{z \in X} \bigl\{ t \bigl[ \bigl(d_\alpha
\bigl(x_i,y^x_i\bigr) + d_\alpha
(x_i,z)\bigr)^{p_\alpha} - d_\alpha^{p_\alpha}
\bigl(x_i,y^x_i\bigr) \bigr] -
d_\alpha^{p_\alpha}(x_i,z) \bigr\}
\\
&&\qquad = \sup_{r >0} \bigl\{ t \bigl[ \bigl(d_\alpha
\bigl(x_i,y^x_i\bigr) + r\bigr)^{p_\alpha}
- d^{p_\alpha}_\alpha\bigl(x_i,y^x_i
\bigr) \bigr] - r^{p_\alpha} \bigr\}
\\
&&\qquad=t\varepsilon(t)\,d_\alpha^{p_\alpha}\bigl(x_i,y_i^x
\bigr)= t\varepsilon (t)\alpha\bigl(d\bigl(x_i,y_i^x
\bigr)\bigr).
\end{eqnarray*}
Taking the sum, we get the result.
\end{pf*}

\begin{pf*}{Proof of Lemma~\ref{omegap}}
Let $y^x$ be a point where the function $y\mapsto f(y)-\sum_{i=1}^n
d^p(x_i,y)$ reaches its maximum. Then, for all $z\in X^n$, it holds
\begin{eqnarray*}
\sum_{i=1}^n d^p
\bigl(x_i,y^x_i\bigr) &\leq& f
\bigl(y^x\bigr)-f(z)+ \sum_{i=1}^n
d^p(x_i,z_i)
\\
&\leq & L \Biggl(\sum_{i=1}^n
d^p\bigl(z_i,y^x_i\bigr)
\Biggr)^{1/p}+\sum_{i=1}^n
d^p(x_i,z_i).
\end{eqnarray*}
Choosing $z=x$, we get $\sum_{i=1}^n d^p(x_i,z_i) \leq L^q.$

Now, assume that $(X,d)$ is geodesic. Then the product space
$(X^n,d^{(n)})$ with $d^{(n)}(x,y)= (\sum_{i=1}^n
d^p(x_i,y_i) )^{1/p}$ is geodesic too. In the calculation above,
take for $z$ a $t$-midpoint of $x$ and $y^x$; that is, choose $z\in
X^n$ such that $d^{(n)}(x,z)=td^{(n)}(x,y^x)$ and
$d^{(n)}(z,y^x)=(1-t)\,d^{(n)}(x,y^x)$, with $t\in[0,1]$. Then, letting
$\ell= d^{(n)}(x,y^x)$, it holds
$\ell^p\leq L (1-t)\ell+ t^p\ell^p,$ and so $\frac{1-t^p}{1-t}\ell^{p-1}\leq L.$ Letting $t\to1$ gives the result.
\end{pf*}

\begin{pf*}{Proof of Lemma~\ref{lemchangemetric}}
Let $\varphi(x)=\alpha^{1/p_\alpha}(x)/x$, $x>0$. Then, by
definition of $p_\alpha$,
\[
\varphi'_+(x)=\frac{\alpha^{1/p_\alpha}(x) ({x\alpha'_+(x)}/{(p_\alpha\alpha(x))}-1 )}{x^2}\leq0.
\]
So $\varphi$ is nonincreasing on $(0,+\infty).$ Thus, if $x>y$,
\begin{eqnarray*}
\alpha^{1/p_\alpha}(x+y)&=&(x+y)\varphi\bigl(x(1+y/x)\bigr)\leq(x+y)\varphi (x)
\\
&=&\alpha^{1/p_\alpha}(x)+y\frac{\alpha^{1/p_\alpha}(x)}{x}\leq \alpha^{1/p_\alpha}(x)+
\alpha^{1/p_\alpha}(y).
\end{eqnarray*}
\upqed\end{pf*}

\section{\texorpdfstring{Holley--Stroock perturbation lemma: Proof of Theorem \protect\ref{holley}}
{Holley--Stroock perturbation lemma: proof of Theorem 1.9}}\label{hs}

In this section, we prove Theorem~\ref{holley}.

\begin{pf*}{Proof of Theorem~\ref{holley}}
The proof follows the line of the original proof~\cite{HS87}; see also
\cite{royer}.
Using the following representation of the entropy,
\[
\ent_\mu (g ) = \inf_{t >0} \biggl\{ \int \biggl(g \log
\biggl( \frac{g}{t} \biggr) - g + t \biggr) \,d\mu \biggr\}
\]
with $g=e^f$, we see that [since $g \log ( \frac{g}{t}  )
- g + t \geq0$]
\[
\ent_{\tilde{\mu}} (g ) \leq\frac{e^{\sup\varphi
}}{Z} \ent_{\mu} (g ) .
\]
Since $\mu$ verifies ${\mathbf{T}_\alpha}(C)$, Theorem~\ref{grs} implies
that, for all $\lambda\in(0,1/C)$,
\begin{eqnarray*}
\ent_{\tilde{\mu}} \bigl(e^f \bigr) \leq \frac{e^{\sup\varphi}}{Z}
\frac{1}{1-\lambda C} \int\bigl(f - Q^\lambda_\alpha f\bigr)
e^f \,d\mu \leq \frac{e^{{\mathrm{Osc}}(\varphi)}}{1-\lambda C} \int\bigl(f - Q^\lambda_\alpha
f\bigr) e^f \,d\tilde{\mu} .
\end{eqnarray*}
In other words, $\tilde\mu$ satisfies $(\tau)-\mathbf{LSI}_\alpha
(\lambda,\frac{e^{{\mathrm{Osc}}(\varphi)}}{1-\lambda C})$, for
any $\lambda\in(0,1/C)$. Now, applying Theorem~\ref{GRS-thm-intro},
we conclude that $\tilde{\mu}$ verifies ${\mathbf{T}_\alpha}
(\widetilde{C} )$, with
\begin{eqnarray*}
\widetilde{C}&=&\kappa_{p_\alpha}\inf_{\lambda\in(0,1/C)} \biggl\{
\frac{1}{\lambda}\frac{1}{(1-\lambda C)^{p_\alpha-1}} \biggr\} e^{(p_\alpha-1)\mathrm{Osc}(\varphi)}
\\
&=& \kappa_{p_\alpha}\frac{p_\alpha^{p_\alpha}}{(p_\alpha
-1)^{p_\alpha-1}}Ce^{(p_\alpha-1)\mathrm{Osc}(\varphi)}= \widetilde {
\kappa}_{p_\alpha}Ce^{(p_\alpha-1)\mathrm{Osc}(\varphi)}.
\end{eqnarray*}
\upqed\end{pf*}

\begin{appendix}\label{app}

\section*{Appendix: Technical results}

In this appendix we prove the technical lemmas on Young functions we
used during the paper.
First, let us prove Lemma~\ref{lemrp}, that we restate below.

\begin{lem}
If $\alpha$ is a Young function satisfying the $\Delta_2$-condition, then
%
\begin{equation}
\label{defrpappendix} r_\alpha:=\inf_{x>0}\frac{x\alpha'_-(x)}{\alpha(x)}
\geq1 \quad\mbox{and}\quad 1<p_\alpha:=\sup_{x>0}\frac{x\alpha'_+(x)}{\alpha
(x)}<+
\infty,
\end{equation}
where $\alpha'_+$ (resp., $\alpha'_-$) denotes the right (resp., left)
derivative of $\alpha.$
\end{lem}

\begin{pf}
Using the convexity of $\alpha$, we see that $\alpha(x)/x\leq\alpha'_-(x)$. This shows that $r_\alpha\geq1.$
On the other hand, the function $\alpha$ is convex, so $\alpha
(2x)\geq\alpha(x)+x\alpha_+'(x)$, for all $x>0$. Since $\alpha$
verifies the $\Delta_2$-condition, there is some constant $K\geq2$\vadjust{\goodbreak}
such that $\alpha(2x)\leq K\alpha(x)$. So we get $x\alpha_+'(x)\leq
(K-1)\alpha(x)$, for all $x>0$. This proves that $p_\alpha<+\infty.$
Let us show that $p_\alpha>1$. Otherwise we would have $r_\alpha
=p_\alpha$ (since $\alpha'_-\leq\alpha'_+$) and so $x\alpha'_-(x)/\alpha(x)=x\alpha'_+(x)/\alpha(x)=1$ for all $x>0.$ This
would imply that $\alpha$ is linear on $[0,\infty)$. This cannot
happen, since by assumption Young functions are increasing and such
that $\alpha'(0)=0$. So $p_\alpha>1.$
\end{pf}
Now let us prove Lemmas~\ref{lemestimationxi} and~\ref{exactcalculation} whose statements are summarized below. Recall that the
function $\xi_\alpha$ is defined by
\[
\xi_\alpha(x):= \sup_{u >0} \frac{\alpha^*(x\alpha_+'(u))}{x\alpha(u)} ,\qquad x > 0.
\]

\begin{lem}
\begin{itemize}
\item Let $\alpha$ be a Young function satisfying the $\Delta_2$-condition, and let $1\leq r_\alpha\leq p_\alpha$, $p_\alpha>1$
be the numbers defined by \eqref{defrpappendix}. Then, it holds
%
\begin{equation}
\label{eqestimationxiappendix} \xi_\alpha(x)\leq(p_\alpha-1)
\max \bigl(x^{{1}/{(p_\alpha
-1)}}; x^{{1}/{(r_\alpha-1)}} \bigr)\qquad \forall x>0,
\end{equation}
with the convention $t^\infty=0$ if $t\leq1$ and $\infty$
otherwise.

\item Let $p_1\geq2$ and $p_2>1$ and let $\alpha=\alpha_{p_1,p_2}$; then $p_\alpha=\max(p_1,p_2)$, and it holds
\[
\xi_\alpha(x)=(p_\alpha-1)x^{{1}/{(p_\alpha-1)}} \qquad\forall x\leq1.
\]
Moreover, for $x\geq1$, it holds
\[
\xi_\alpha(x)=\cases{
\displaystyle p_1
\biggl(\frac{1}{q_2}x^{{1}/{(p_2-1)}}+ \biggl(\frac{1}{q_1}-
\frac{1}{q_2} \biggr)\frac{1}{x} \biggr), &\quad  $\mbox{if } p_1
\geq p_2,$
\vspace*{2pt}\cr
\displaystyle \max \bigl((p_1-1)x^{{1}/{(p_1-1)}};(p_2-1)x^{
{1}/{(p_2-1)}}
\bigr), & \quad $\mbox{if } p_1\leq p_2,$}
\]
where $q_1=p_1/(p_1-1)$ and $q_2=p_2/(p_2-1).$
\end{itemize}
\end{lem}

\begin{pf}
Defining $\omega(x)=\sup_{u>0}\frac{\alpha^*(ux)}{\alpha^*(u)}$,
for all $x\geq0$, we get
\[
\xi_\alpha(x)\leq\frac{\omega(x)}{x} \sup_{u>0}
\frac{\alpha^*(\alpha'_+(u))}{\alpha(u)} \qquad\forall x>0.
\]
From the convexity inequality $\alpha(x)\geq\alpha(u)+(x-u)\alpha'_+(u)$, $x,u\geq0,$ we deduce immediately that $\alpha^*(\alpha'_+(u))=u\alpha'_+(u)-\alpha(u),$ for all $u\geq0.$ Thus
\[
\sup_{u>0}\frac{\alpha^*(\alpha'_+(u))}{\alpha(u)}=p_\alpha-1.
\]
So, all we have to show is that $\omega(x)\leq\max (x^{
{p_\alpha}/{(p_\alpha-1)}} ; x^{{r_\alpha}/{(r_\alpha-1)}}
)$, for all $x\geq0$.

Define $\varphi(u)=\alpha(u)/u^{p_\alpha}$ and $\psi(u)=\alpha
(u)/u^{r_\alpha},$ for all $u>0.$ As in the proof of Lemma~\ref{lemchangemetric},\vadjust{\goodbreak} a simple calculation shows that $\varphi$ is
nonincreasing, and $\psi$ is nondecreasing.
As a result,
\begin{eqnarray*}
\alpha(tu)&\leq& t^{p_\alpha}\alpha(u) \qquad\forall u\geq0,\ \forall t\geq1,
\\
\alpha(tu)&\leq& t^{r_\alpha}\alpha(u) \qquad\forall u\geq0,\ \forall t\in[0, 1].
\end{eqnarray*}
Taking the Fenchel--Legendre transform yields
\begin{eqnarray*}
\alpha^*(v/t)&\geq& t^{p_\alpha}\alpha^*\bigl(v/t^{p_\alpha}\bigr)\qquad
\forall v\geq0,\ \forall t\geq1,
\\
\alpha^*(v/t)&\geq& t^{r_\alpha}\alpha^*\bigl(v/t^{r_\alpha}\bigr)\qquad
\forall v\geq0,\ \forall t\in[0, 1].
\end{eqnarray*}
Equivalently,
\begin{eqnarray*}
\alpha^*(ux)&\leq& x^{{p_\alpha}/{(p_\alpha-1)}}\alpha^*(u)\qquad \forall u\geq0,\ \forall x
\in[0,1],
\\
\alpha^*(ux)&\leq& x^{{r_\alpha}/{(r_\alpha-1)}}\alpha^*(u) \qquad\forall u\geq0,\ \forall x\geq1.
\end{eqnarray*}
And since $r_\alpha\leq p_\alpha$, we conclude that $\omega(x)\leq
\max (x^{{p_\alpha}/{(p_\alpha-1)}} ; x^{{r_\alpha
}/{(r_\alpha-1)}}  ),$ \mbox{$x\geq0.$}

Now, let us calculate $\xi_{\alpha_{p_1,p_2}}$, for $p_1\geq2$,
$p_2>1$. First observe that $\xi_{\lambda\alpha}=\xi_\alpha$ for
all $\lambda>0.$ It will be more convenient to do the calculation with
the function $\alpha:=\bar{\alpha}_{p_1,p_2}=\frac{1}{p_1}\alpha_{p_1,p_2}.$ Let us denote by $q_1=\frac{p_1}{p_1-1}$, $q_2=\frac
{p_2}{p_2-1},$ the conjugate exponents of $p_1$ and $p_2.$ Then the
following identity holds: $\alpha^*=\bar{\alpha}_{q_1,q_2}.$ Let us
show that
\[
\xi_\alpha(x)=(p_\alpha-1)x^{1/(p_\alpha-1)}
\]
for $x\leq1.$ The case $x>1$ is similar and left to the reader. Define
\[
\varphi(u)=\frac{\alpha^*(xu)}{\alpha\circ\alpha'^{ -
1}(u)},\qquad u>0.
\]
We have to distinguish three cases:
\[
\xi_{\alpha}(x)=\frac{1}{x}\max \Bigl(\sup_{u\leq1}
\varphi(u); \sup_{1\leq u\leq1/x}\varphi(u) ; \sup_{u\geq1/x}\varphi(u)
\Bigr).
\]

\textit{Case} 1. $0<u\leq1$. Then $\varphi(u)=(p_1-1)x^{q_1}.$

\textit{Case} 2. $1\leq u\leq1/x$. Then
\[
\varphi(u)=\frac{x^{q_1}}{q_1} \frac{u^{q_1}}{
{u^{q_2}}/{p_2}+{1}/{p_1}-{1}/{p_2}}.
\]
If $p_1\geq p_2$, then the function $\varphi$ is nonincreasing on
$[1,1/x]$, and so
\[
\sup_{1\leq u\leq1/x}\varphi(u)=\varphi(1)=(p_1-1)x^{q_1.}
\]
If $p_1\leq p_2$, then the function $\varphi$ is nondecreasing on
$[1,1/x]$, and so
\[
\sup_{1\leq u\leq1/x}\varphi(u)=\varphi(1/x).
\]

\textit{Case} 3. $u\geq1/x.$ Then
\[
\varphi(u)=\frac{{(xu)^{q_2}}/{q_2}+{1}/{q_1} -
{1}/{q_2}}{{u^{q_2}}/{p_2}+{1}/{p_1} -{1}/{p_2}}.
\]
If $p_1\geq p_2$, the function $\varphi$ is nonincreasing on $[1/x,
\infty)$, and so
\[
\sup_{u\geq1/x}\varphi(u)=\varphi(1/x).
\]
If $p_1\leq p_2$, the function $\varphi$ is nondecreasing on
$[1/x,\infty)$, and so
\[
\sup_{u\geq1/x}\varphi(u)=\lim_{u\to\infty}\varphi(u)=
(p_2-1)x^{q_2}.
\]
Observe, in particular, that $\varphi$ never reaches its supremum at $u=1/x.$
We conclude that
\[
\sup_{u>0} \varphi(u)=\max\bigl( (p_1-1)x^{q_1};
(p_2-1)x^{q_2}\bigr),
\]
and so
\begin{eqnarray*}
\xi_\alpha(x) & =& \max \bigl( (p_1-1)x^{1/(p_1-1)};
(p_2-1)x^{1/(p_2-1)} \bigr)
\\
& =&\bigl(\max(p_1;p_2)-1\bigr)x^{1/(\max(p_1;p_2)-1)}.
\end{eqnarray*}
Since $p_\alpha=\max(p_1;p_2)$, the proof is complete.
\end{pf}
\end{appendix}

\section*{Acknowledgments}
We thank the referees for a careful reading of our paper and for providing
useful feedback.

%


\printaddresses

\end{document}